\documentclass[a4paper,11pt]{amsart}
\usepackage{amssymb, amsthm, bm, comment, enumerate, hyperref, mathtools, xspace}
\usepackage[paper=a4paper,                     
            left=27mm,                         
            right=27mm,                        
            top=25mm,                          
            bottom=25mm,                       
            bindingoffset=0mm                  
           ]{geometry}

\theoremstyle{plain}
\newtheorem{theorem}{Theorem}[section]
\newtheorem{conjecture}[theorem]{Conjecture}
\newtheorem{corollary}[theorem]{Corollary}
\newtheorem{lemma}[theorem]{Lemma}
\newtheorem{problem}[theorem]{Problem}
\newtheorem{proposition}[theorem]{Proposition}

\theoremstyle{definition}

\theoremstyle{remark}
\newtheorem{example}[theorem]{Example}

\newtheorem{remark}[theorem]{Remark}

\DeclareMathOperator{\ZZ}{\mathbb{Z}}

\DeclareMathOperator{\Hilb}{Hilb}

\DeclareMathOperator{\Proj}{Proj}
\DeclareMathOperator{\rad}{rad}
\DeclareMathOperator{\toset}{set}

\makeatletter
\def\@tocline#1#2#3#4#5#6#7{\relax
  \ifnum #1>\c@tocdepth
  \else
    \par \addpenalty\@secpenalty\addvspace{#2}
    \begingroup \hyphenpenalty\@M
    \@ifempty{#4}{
      \@tempdima\csname r@tocindent\number#1\endcsname\relax
    }{
      \@tempdima#4\relax
    }
    \parindent\z@ \leftskip#3\relax \advance\leftskip\@tempdima\relax
    \rightskip\@pnumwidth plus4em \parfillskip-\@pnumwidth
    #5\leavevmode\hskip-\@tempdima
      \ifcase #1
       \or\or \hskip 1em \or \hskip 2em \else \hskip 3em \fi
      #6\nobreak\relax
    \dotfill\hbox to\@pnumwidth{\@tocpagenum{#7}}\par
    \nobreak
    \endgroup
  \fi}
\makeatother
\title[CGFs, empty WCIs and positivity]{Cyclotomic generating functions, empty weighted complete intersections and positivity}
\author{Mona Gatzweiler}
\address[M. Gatzweiler]{Fakultät für Mathematik, Universität Wien, Österreich}
\email{mona.gatzweiler@univie.ac.at}
\author{Fabi\'an Levic\'an-Santib\'a\~nez}
\address[F. Levic\'an-Santib\'a\~nez]{Fakultät für Mathematik, Universität Wien, Österreich}
\email{fabian.levican@univie.ac.at}
\author{Atsuro Yoshida}
\address[A. Yoshida]{Fakultät für Mathematik, Universität Wien, Österreich}
\email{atsuro.yoshida@univie.ac.at}
\date{\today}

\begin{document}

\begin{abstract}
We give a sufficient combinatorial condition for the non-negativity of the coefficients of polynomial quotients of products of $q$-integers, also known as \textit{cyclotomic generating functions (CGFs)}. This slightly extends work by Iano-Fletcher, Pizzato, Sano and Tasin, who studied this condition as a criterion for quasismoothness of complete intersections in weighted projective spaces. As a consequence, we solve a problem by Billey and Swanson, prove most cases of an unpublished conjecture by Stanton and most cases of two conjectures by Gatzweiler and Krattenthaler. We also study sufficient conditions given by structural properties of the division lattice.
\end{abstract}

\maketitle

\tableofcontents

\section{Introduction}
Let $f(q)$ be a non-zero polynomial with non-negative integer coefficients in the variable~$q$. Then, the polynomial $f(q)$ is called a \emph{cyclotomic generating function (CGF)} if there exist $\alpha, \beta \in \ZZ_{\geq 0}$ and $a_1, \ldots, a_n, b_1, \ldots, b_n \in \mathbb{Z}_+$ such that
\begin{equation}
    f(q) = \alpha q^{\beta} \prod_{i=1}^n \frac{[a_i]_q}{[b_i]_q} = \alpha q^{\beta} \prod_{i=1}^n \frac{1-q^{a_i}}{1-q^{b_i}}, \label{equation:CGF-def}
\end{equation}
where $[n]_q = (1-q^n)/(1-q)$ is a $q$-integer.
CGFs are of great significance in combinatorics as they are the generating functions of many combinatorial objects with respect to statistics on these objects. A prototypical example of such generating functions is the $q$-binomial coefficient, being the generating function of integer partitions inside a $k \times (n-k)$ frame with respect to the area statistic:
\begin{equation}
    \sum_{\lambda \subset k \times (n-l)} q^{|\lambda|} =\binom{n}{k}_q =  \frac{\prod_{i=1}^n (1-q^i)}{\prod_{i=1}^k (1-q^l) \prod_{i=1}^{n-k}(1-q^i)}. \label{equation:q-binom}
\end{equation}
For further reading, the paper \cite{billey-swanson-cyclotomic} offers an excellent survey on CGFs from different perspectives and a long (but not exhaustive) list of sources of CGFs throughout combinatorics.

Considering the expression in \eqref{equation:q-binom}, it is not \emph{a priori} clear that the right-hand side is a polynomial with non-negative integer coefficients.
In general, given $a_1, \ldots, a_n, b_1, \ldots, b_n \in \mathbb{Z}_+$ and assuming that the quotient
\begin{equation}
    \frac{\prod_{i=1}^n{1-q^{a_i}}}{\prod_{i=1}^n{1-q^{b_i}}}. \label{equation:A-B-fraction}
\end{equation}
simplifies into a polynomial in $q$, it is not easy to determine whether it has non-negative integer coefficients. Nevertheless, there are many known rational functions in the form of \eqref{equation:A-B-fraction} that are polynomials with non-negative integer coefficients, as all CGFs are essentially of this form up to a constant and a power of $q$.

The polynomiality and non-negativity of fractions of the form  \eqref{equation:A-B-fraction} have been extensively studied in the literature. For instance, in the unpublished note \cite{stanton-fake}, Stanton posed the following conjecture related to \emph{fake Gaussian sequences}.

\begin{conjecture} \cite[Conjecture 1]{stanton-fake} \label{conjecture:conjecture-stanton}
    Let $\mathbf{a}=(a_1, \ldots, a_n)$ be a sequence of non-negative integers.
    If
    \begin{equation}
        \frac{\prod_{i=1}^n (1-q^{m+i})^{a_i}}{\prod_{i=1}^n (1-q^i)^{a_i}}. \label{equation:stanton-fraction}
    \end{equation}
    is a polynomial for all non-negative integers~$m$ and $\mathbf{a}$ is symmetric, then \eqref{equation:stanton-fraction} has non-negative integer coefficients for all $m$.
\end{conjecture}
More recently, Gatzweiler and Krattenthaler \cite{gatzweiler-krattenthaler-positivity} study the following conjecture.
\begin{conjecture} \cite[Conjecture 1]{gatzweiler-krattenthaler-positivity} \label{conjecture:conjecture-gk}
    Let $n, k, l \in \ZZ_{\geq 0}$. If
    \begin{equation}
        \frac{\binom{n}{k}_q}{\binom{n}{l}_q} = \frac{[l]_q![n-l]_q!}{[k]_q![n-k]_q!} = \frac{\prod_{i=1}^l (1-q^i) \prod_{i=1}^{n-l} (1-q^i)}{\prod_{i=1}^k (1-q^i) \prod_{i=1}^{n-k} (1-q^i)} \label{equation:GK}
    \end{equation}
    is a polynomial, then it has non-negative integer coefficients.
\end{conjecture}
Note that in Conjecture \ref{conjecture:conjecture-gk} it suffices to consider the case where $1 \leq l < k  \leq n/2$. In this case, \eqref{equation:GK} is equal to
\begin{equation}
    \frac{\prod_{i=n-k+1}^{n-l} (1-q^i)}{\prod_{i=l+1}^{k} (1-q^i)}.
\end{equation}
In \cite{gatzweiler-krattenthaler-positivity}, the authors prove Conjecture~\ref{conjecture:conjecture-gk} for the cases in which $l = k, k-1, k-2, k-3$ and $l = 0, 1, 2$ by direct arguments involving cyclotomic polynomials. 

Furthermore, Gatzweiler and Krattenthaler claim that, in Conjecture~\ref{conjecture:conjecture-stanton}, the conditions regarding the polynomiality of $m$ and the symmetry of $\mathbf{a}$ can be dropped. In other words:

\begin{conjecture} \cite[Conjecture 4]{gatzweiler-krattenthaler-positivity} \label{conjecture:conjecture-stanton-gk}
    Let $m, n$ be positive integers with $m \geq n$, and let $\mathbf{a}$ be a sequence of non-negative integers.
    If \eqref{equation:stanton-fraction} is a polynomial in $q$, then it has non-negative coefficients.
\end{conjecture}

In this article, we study sufficient conditions for rational functions of the form \eqref{equation:A-B-fraction} to have non-negative integer coefficients in the cases where these functions are polynomials. We classify our criteria into two categories: algebro-geometric and lattice-theoretic. The algebro-geometric point of view is particularly effective; we slightly extend ideas by \cite{iano-fletcher-working} and \cite{pizzato-sano-tasin-effective} to give a necessary and sufficient combinatorial condition for the existence of \emph{empty} complete intersections in weighted projective spaces. This condition involves a family of Frobenius coin problems. It can be stated as follows:

\begin{proposition}[Proposition \ref{proposition:regular-sequence-iff-wci-iff-frobenius-coin-problems}] \label{proposition:regular-sequence-iff-wci-iff-frobenius-coin-problems-in-intro}
    Let $a_1, \ldots, a_n, b_1, \ldots, b_n \in \mathbb{Z}_+$. Let $A = [a_i]_{i = 1, \ldots, n}$ and $B = [b_i]_{i = 1, \ldots, n}$ be the corresponding multisets. Then, the following are equivalent:
    \begin{enumerate}[1.]
        \item There exists a regular sequence of homogeneous elements $f_1, \ldots, f_n$ of maximal length with $\deg(f_i) = a_i$ in $\mathbb{C}[x_1, \ldots, x_n]$ with $\deg(x_i) = b_i$. In other words, there exists an empty weighted complete intersection $X_{a_1, \ldots, a_n} \subset \mathbb{P}(b_1, \ldots, b_n)$. 
        \item For every subset of indices $I \subseteq \{1, \ldots, n\}$, the inequality $\# (A~\overline{\cap}~\langle \toset(B_I) \rangle_{\text{semigroup}}) \geq \# I$ holds.
    \end{enumerate}
\end{proposition}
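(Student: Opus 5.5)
The plan is to translate both conditions into a statement about the common zero locus of the $f_j$ in affine space $\mathbb{C}^n$. Throughout I read $A~\overline{\cap}~\langle \toset(B_I) \rangle_{\text{semigroup}}$ as the sub-multiset of $A$ consisting of those $a_j$ (counted with multiplicity) that are non-negative integer combinations of $\{b_i : i \in I\}$. Write $S_I$ for this semigroup and $R = \mathbb{C}[x_1, \ldots, x_n]$ with $\deg(x_i) = b_i$.

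The first step is the standard reduction. Since $R$ is a graded Cohen--Macaulay ring with irrelevant ideal $\mathfrak{m} = (x_1, \ldots, x_n)$, homogeneous elements $f_1, \ldots, f_n$ of positive degree form a regular sequence if and only if $\sqrt{(f_1, \ldots, f_n)} = \mathfrak{m}$. Equivalently, their common zero locus $V(f_1, \ldots, f_n) \subseteq \mathbb{C}^n$ is just $\{0\}$, which is exactly the emptiness of $X_{a_1, \ldots, a_n} \subset \mathbb{P}(b_1, \ldots, b_n)$. I also use the observation that $V(f_1, \ldots, f_n)$ is stable under the weighted action $t \cdot x = (t^{b_1} x_1, \ldots, t^{b_n} x_n)$ of $\mathbb{C}^*$. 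Hence if it contains a nonzero point, it contains a $\mathbb{C}^*$-orbit, which has dimension $1$.

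For $1 \Rightarrow 2$ I argue by contraposition. Suppose some $I$ has fewer than $\#I$ elements $a_j$ lying in $S_I$. Restrict to the coordinate subspace $\mathbb{C}^I = \{x_k = 0 \text{ for } k \notin I\}$. A homogeneous polynomial of degree $a_j \notin S_I$ contains no monomial in the variables $x_I$ alone, so its restriction to $\mathbb{C}^I$ vanishes identically. Thus $V(f_1, \ldots, f_n) \cap \mathbb{C}^I$ is cut out in $\mathbb{C}^I$ by fewer than $\#I$ equations. By Krull's height theorem every component through the origin has dimension at least $1$, so there is a nonzero common zero, contradicting $1$.

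For $2 \Rightarrow 1$ I take each $f_j$ to be a general element of $R_{a_j}$. I stratify $\mathbb{C}^n \setminus \{0\}$ into the tori $T_I = \{x : x_i \neq 0 \iff i \in I\} \cong (\mathbb{C}^*)^{I}$ for nonempty $I$, and show $V(f_1, \ldots, f_n) \cap T_I = \emptyset$ for each $I$. There are only finitely many $I$, and a finite intersection of dense open conditions on $\prod_j R_{a_j}$ is still dense open, so one general choice works for all $I$ at once.

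On $T_I$, the restriction of $f_j$ is a general linear combination of the monomials in $x_I$ of degree $a_j$. This space is nonzero exactly when $a_j \in S_I$, and it is base-point free on $T_I$ because monomials never vanish on the torus. By hypothesis $2$ there are at least $\#I$ such indices $j$. Applying Bertini's theorem (or a Kleiman-type dimension count for base-point-free linear systems) successively, each general $f_j$ with $a_j \in S_I$ drops the dimension by one. So $V(f_1, \ldots, f_n) \cap T_I$ has dimension at most $\#I - \#I = 0$. But this set is $\mathbb{C}^*$-stable and $T_I$ contains no fixed points, so if nonempty it would have dimension at least $1$; hence it is empty. Combining over all $I$ gives $V = \{0\}$, and the first step yields the regular sequence.

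The main obstacle is the dimension-count step. Bertini must be applied to linear systems on the torus that are base-point free but possibly not very ample; their maps may be non-injective, for example through a finite quotient arising from $\gcd$ issues among the weights. I would handle this with the version of Bertini for base-point-free systems, under which general members of the pullback cut down dimension by one, possibly applied on the quotient $T_I/\mathbb{C}^*$. I would also check that reducibility of general members, which can occur here, does not affect the dimension bound.
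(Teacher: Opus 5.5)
Your proof is correct and is essentially the paper's argument, moved from $\mathbb{P}(b_1,\ldots,b_n)$ and its strata $\Pi_I^{\circ}$ to the punctured cone $\mathbb{C}^n\setminus\{0\}$ and the tori $T_I$; there, $\mathbb{C}^*$-stability plays the role of the paper's final cut from dimension $0$ to dimension $-1$. The step you flag as the main obstacle needs no Bertini-type theorem. As in part~4 of Lemma~\ref{lemma:technical-lemma}, the members of a base-point-free system that vanish on a given irreducible component form a proper linear subspace, so a general member avoids all finitely many components and the dimension drops, whatever happens with irreducibility or gcds of the weights.
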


The reader is referred to Sections \ref{section:preliminaries} and \ref{section:algebraic-geometry} for definitions and notation.

In \cite{billey-swanson-cyclotomic}, the authors define the \emph{HSOP monoid} (HSOP stands for \emph{homogeneous system of parameters}) as the set of CGFs that are Hilbert series of the quotient of $\mathbb{C}[x_1, \ldots, x_n]$ by a regular sequence of homogeneous elements $f_1, \ldots, f_n$ of maximal length. They note that this set is a monoid. They propose the following:
\begin{problem} \cite[Problem 60]{billey-swanson-cyclotomic} \label{problem:problem-bs}
    Besides identifying a specific homogeneous system of parameters in a graded ring, how can one test membership in the HSOP monoid?
\end{problem}

Proposition~\ref{proposition:regular-sequence-iff-wci-iff-frobenius-coin-problems-in-intro} (Proposition~\ref{proposition:regular-sequence-iff-wci-iff-frobenius-coin-problems}) gives a solution to Problem \ref{problem:problem-bs}. The following is our main theorem, which provides a more practical sufficient condition.

\begin{theorem}[Theorem~\ref{theorem:main-theorem}]
    Let $a_1, \ldots, a_n, b_1, \ldots, b_n \in \mathbb{Z}_+$ with $a_i \leq a_{i + 1}$ and $b_i \leq b_{i + 1}$ for all $i = 1, \ldots, n - 1$. Let $A = [a_i]_{i = 1, \ldots, n} $ and $ B = [b_i]_{i = 1, \ldots, n}$ be the corresponding multisets. Assume that
    \[
    \prod_{i = 1}^n \frac{1 - q^{a_i}}{1 - q^{b_i}} \in \mathbb{Z}[q].
    \]
    Assume further that
    \[
    2b_n\left \lfloor \frac{b_{n - 1}}{2} \right \rfloor - b_1 < a_1.
    \]
    Then, for every subset of indices $I \subseteq \{1, \ldots, n\}$, $\# (A~\overline{\cap}~\langle \toset(B_I) \rangle_{\text{semigroup}}) \geq \# I$. In particular,
    \[
    \prod_{i = 1}^n \frac{1 - q^{a_i}}{1 - q^{b_i}} \in \mathbb{Z}_{\geq 0}[q].
    \]
\end{theorem}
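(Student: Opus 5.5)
The plan is to verify condition~2 of Proposition~\ref{proposition:regular-sequence-iff-wci-iff-frobenius-coin-problems} directly for every $I\subseteq\{1,\ldots,n\}$. The final non-negativity claim then follows from that proposition: the product is the Hilbert series of $\mathbb{C}[x_1,\ldots,x_n]/(f_1,\ldots,f_n)$ for a regular sequence, hence lies in $\mathbb{Z}_{\geq 0}[q]$. The two ingredients are a divisibility count coming from polynomiality and a Frobenius-number bound coming from the numerical hypothesis.

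\emph{Step 1 (divisibility count).} Write $\prod_i(1-q^{c})=\prod_{d\mid c}\Phi_d(q)$ for each factor. Polynomiality of $\prod_i (1-q^{a_i})/(1-q^{b_i})$ then says that for every $d\geq 1$ the multiplicity of $\Phi_d$ in the numerator is at least that in the denominator, that is,
\[
\#\{i : d \mid a_i\} \;\geq\; \#\{i : d\mid b_i\}.
\]

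\emph{Step 2 (Frobenius bound).} Fix nonempty $I$, let $S=\toset(B_I)=\{s_1<\cdots<s_k\}$, and let $g=\gcd(S)$. The goal is to show that every multiple of $g$ that is $\geq a_1$ lies in $\langle S\rangle_{\text{semigroup}}$.

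If $k=1$, then $\langle S\rangle$ is exactly the set of multiples of $s_1=g$, so there is nothing to prove.

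If $k\geq 2$, apply an Erd\H{o}s--Graham type bound to the primitive set $S/g$ and rescale by $g$. The largest multiple of $g$ outside $\langle S\rangle$ is $g\,F(S/g)$. I expect to obtain
\[
g\,F(S/g)\;\leq\; 2 s_k\left\lfloor \frac{s_{k-1}}{2}\right\rfloor - s_1 \;\leq\; 2b_n\left\lfloor\frac{b_{n-1}}{2}\right\rfloor-b_1 \;<\; a_1,
\]
using $s_k\leq b_n$, $s_{k-1}\leq b_{n-1}$ and $s_1\geq b_1$. The second inequality needs one check: lowering $s_1$ to $b_1$ only increases the right-hand side, and raising $s_k,s_{k-1}$ to $b_n,b_{n-1}$ does too, so it is monotone in the right direction.

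For the first inequality, the simplest route is a two-generator argument. Choose the pair $s_1,s_k$ (or $s_{k-1},s_k$) and use Sylvester's formula $xy/h-x-y$, where $h$ is the gcd of the pair. Then handle the remaining residues modulo that pair's gcd using the other generators, in the style of Brauer's reduction. If that is too lossy, I would fall back on the classical bound $F\leq 2s_{k}\lfloor s_{k-1}/k\rfloor-s_k$ with $k\geq 2$ and compare the two forms directly.

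\emph{Step 3 (conclusion).} Every $a_i$ satisfies $a_i\geq a_1$, so by Step~2 each $a_i$ divisible by $g$ lies in $\langle S\rangle$. Every $b_i$ with $i\in I$ is divisible by $g$. Combining with Step~1 gives
\[
\#(A~\overline{\cap}~\langle S\rangle)\;\geq\;\#\{i: g\mid a_i\}\;\geq\;\#\{i: g\mid b_i\}\;\geq\;\# I.
\]

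The main obstacle is Step~2: getting a Frobenius-number bound with exactly the shape $2b_n\lfloor b_{n-1}/2\rfloor-b_1$ that is uniform over all subsets $S$. This requires tracking the gcd carefully under rescaling, and in particular handling subsets $S$ that are not primitive, where the rescaled Erd\H{o}s--Graham estimate must still land below $a_1$.
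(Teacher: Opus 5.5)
Your Steps 1 and 3 are the paper's argument: polynomiality forces $g=\gcd(\toset(B_I))$ to divide at least $\#I$ of the $a_i$, so it suffices that every multiple of $g$ that is $\geq a_1$ lies in $\langle \toset(B_I)\rangle_{\text{semigroup}}$. The gap is Step 2. It is the only non-formal step, and you leave it at ``I expect to obtain'' without carrying out either route. Your fallback bound $F\leq 2s_k\lfloor s_{k-1}/k\rfloor - s_k$ is not valid; it is presumably a misremembering of the Erd\H{o}s--Graham bound $2s_{k-1}\lfloor s_k/k\rfloor - s_k$. For $S=\{4,5,6\}$ it gives $6$, but $7\notin\langle 4,5,6\rangle_{\text{semigroup}}$, so $F(4,5,6)=7$. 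As written, the inequality $gF(S/g)\leq 2s_k\lfloor s_{k-1}/2\rfloor-s_1$, on which everything rests, is unproved.

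Your first route does work if you run Brauer's reduction along the whole chain $t_1<\cdots<t_k$, where $t_i=s_i/g$, $d_i=\gcd(t_1,\ldots,t_i)$ and $r_i=d_{i-1}/d_i$. Since $\prod_{i\geq 2} r_i=d_1/d_k=t_1$, Brauer's bound gives
\[
F(t_1,\ldots,t_k)\leq \sum_{i=2}^k t_i(r_i-1)-t_1\leq t_k(t_1-1)-t_1 .
\]
This is Schur's bound $(t_1-1)(t_k-1)-1$. Rescaling gives $gF(S/g)\leq s_1s_k/g-s_1-s_k\leq s_k(s_1-1)-s_1\leq 2s_k\lfloor s_{k-1}/2\rfloor-s_1$. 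Your monotonicity check then finishes the proof, using $s_{k-1}\leq b_{n-1}$, which holds because $s_{k-1}<s_k\leq b_n$.

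The paper instead cites Selmer's bound for $\toset(B_I)/g$ and then makes the same crude comparisons, using $\#\toset(B_I)\geq 2$. If you follow that route, Selmer's bound $2t_k\lfloor t_1/k\rfloor-t_1$ must be applied to a minimal generating set. Applied verbatim to $\{4,8,12,13\}$ it gives $22$, while $F=35$. Schur's bound needs no such care.
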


Applying this theorem to Conjecture~\ref{conjecture:conjecture-gk}, we obtain the following corollary.
\begin{corollary}[Corollary~\ref{corollary:conjecture-gk-upper-bound}]
    Conjecture \ref{conjecture:conjecture-gk} is true if
    \[
    2k \left \lfloor \frac{k - 1}{2} \right \rfloor + k - l < n.
    \]
    In particular, it is true if $n \geq k^2$. Hence, it is true except perhaps for finitely many choices of $l, n$ for each choice of $k$.
\end{corollary}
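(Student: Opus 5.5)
We apply the main theorem (Theorem~\ref{theorem:main-theorem}) to the reduced form of \eqref{equation:GK}. We may assume $1 \leq l < k \leq n/2$, and we put $n_0 = k-l$. Then
\[
\frac{\binom{n}{k}_q}{\binom{n}{l}_q} = \prod_{i=1}^{k-l} \frac{1-q^{a_i}}{1-q^{b_i}},
\qquad a_i = n-k+i,\quad b_i = l+i .
\]
Both sequences are increasing. The smallest entries are $a_1 = n-k+1$ and $b_1 = l+1$, and the largest $b$'s are $b_{n_0} = k$ and $b_{n_0-1} = k-1$. Polynomiality is a hypothesis of Conjecture~\ref{conjecture:conjecture-gk}, so the only remaining hypothesis of the theorem is the numerical inequality
\[
2k\left\lfloor \frac{k-1}{2}\right\rfloor - (l+1) < n-k+1 .
\]
This rearranges to $2k\lfloor (k-1)/2\rfloor + k - l - 2 < n$. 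It is therefore implied by the stated condition $2k\lfloor (k-1)/2\rfloor + k - l < n$; the paper's slightly stronger form is presumably just a convenient normalization.

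The case $n_0 = 1$, where $b_{n_0-1}$ is undefined, needs separate treatment. Here $l = k-1$, a case already settled in \cite{gatzweiler-krattenthaler-positivity}. It is also immediate: a quotient $(1-q^a)/(1-q^b)$ that is a polynomial forces $b \mid a$, so it equals $[a/b]_{q^b}$ and has non-negative coefficients.

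For the "in particular" claim, suppose $n \geq k^2$. Since $2\lfloor (k-1)/2\rfloor \leq k-1$, we get
\[
2k\lfloor (k-1)/2\rfloor + k - l \leq k(k-1) + k - l = k^2 - l < k^2 \leq n,
\]
using $l \geq 1$. For finiteness, fix $k$. Then $l$ ranges over the finite set $\{0,\ldots,k\}$, and any $n$ that violates the condition satisfies $n < k^2$. So only finitely many pairs $(l,n)$ remain, and the cases $l \in \{0, k\}$ are trivial anyway.

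The main obstacle is bookkeeping: checking that the reduced fraction really matches the hypotheses of the theorem with this indexing, including the degenerate cases $n_0 \leq 1$ and the reduction to $k \leq n/2$. The latter uses the symmetry $\binom{n}{k}_q = \binom{n}{n-k}_q$, and swapping $k$ and $l$ if needed, since otherwise the quotient is the reciprocal and is not a polynomial unless it is trivial.
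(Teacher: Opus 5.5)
Your proposal is correct and is the intended argument, since the paper gives no separate proof and simply applies Theorem~\ref{theorem:main-theorem} to $A=[n-k+1,\dots,n-l]$ and $B=[l+1,\dots,k]$: the stated inequality implies the theorem's hypothesis (with a margin of $2$, as you note), and the $n\ge k^2$ and finiteness claims follow from $2\lfloor (k-1)/2\rfloor\le k-1$. Your separate treatment of the degenerate case $k-l=1$, where $b_{n_0-1}$ is undefined, also covers a small point the paper leaves implicit.
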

Furthermore, applying Theorem~\ref{theorem:main-theorem} to Conjecture~\ref{conjecture:conjecture-stanton-gk}, we obtain the following corollary.
\begin{corollary}[Corollary~\ref{corollary:conjecture-stanton-upper-bound}]
    Conjectures \ref{conjecture:conjecture-stanton} and \ref{conjecture:conjecture-stanton-gk} are true if
    \[
    2n \left \lfloor \frac{n}{2} \right \rfloor - 2 < m.
    \]
    In particular, they are true if $m \geq n^2$. 
    Hence, they are true except perhaps for finitely many choices of $m$ for each choice of $n$.
\end{corollary}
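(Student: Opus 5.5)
The plan is to apply Theorem~\ref{theorem:main-theorem} directly to the fraction \eqref{equation:stanton-fraction}, after translating it into the notation of \eqref{equation:A-B-fraction}. Fix $n$, $m$ and $\mathbf{a}=(a_1,\ldots,a_n)$, and set $N=\sum_i a_i$.

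\textbf{Degenerate cases.} If $N=0$, the fraction equals $1$ and there is nothing to prove. If $N=1$, then $\mathbf{a}$ is a unit vector $e_j$ and the fraction is $[m+j]_q/[j]_q$. If this is a polynomial, then $j\mid m+j$, so the fraction equals $1+q^j+\cdots+q^{m}$, which has non-negative coefficients. This case must be treated separately because the hypothesis of Theorem~\ref{theorem:main-theorem} involves $b_{N-1}$, which is undefined when $N=1$. From now on assume $N\ge 2$.

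\textbf{Setting up the multisets.} Let $B$ be the multiset containing each $i$ with multiplicity $a_i$, and let $A$ be the multiset containing each $m+i$ with multiplicity $a_i$. Then the fraction is $\prod_{j=1}^N (1-q^{\alpha_j})/(1-q^{\beta_j})$, where $\alpha_1\le\cdots\le\alpha_N$ and $\beta_1\le\cdots\le\beta_N$ are the sorted elements of $A$ and $B$. Since $i\mapsto m+i$ is increasing, the smallest element of $A$ is $\alpha_1=m+\beta_1$. Also $\beta_{N-1}\le\beta_N\le n$ and $\beta_1\ge 1$.

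\textbf{Checking the hypotheses.} Polynomiality is assumed in Conjecture~\ref{conjecture:conjecture-stanton-gk}. In Conjecture~\ref{conjecture:conjecture-stanton} it is assumed for all $m$, hence in particular for the given $m$. The symmetry of $\mathbf{a}$ and the condition $m\ge n$ are never used. The inequality required by Theorem~\ref{theorem:main-theorem} is
\[
2\beta_N\left\lfloor \frac{\beta_{N-1}}{2}\right\rfloor-\beta_1<\alpha_1=m+\beta_1 ,
\]
that is, $2\beta_N\lfloor \beta_{N-1}/2\rfloor-2\beta_1<m$. By monotonicity of the left-hand side in $\beta_N$ and $\beta_{N-1}$, together with $\beta_1\ge1$, the left-hand side is at most $2n\lfloor n/2\rfloor-2$. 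So the hypothesis $2n\lfloor n/2\rfloor-2<m$ suffices, and Theorem~\ref{theorem:main-theorem} then gives that the fraction lies in $\mathbb{Z}_{\ge0}[q]$.

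\textbf{The ``in particular'' statements.} Since $2n\lfloor n/2\rfloor\le n^2$, the condition $m\ge n^2$ gives $2n\lfloor n/2\rfloor-2<n^2\le m$. Hence, for each fixed $n$, only the finitely many values $m<n^2$ (in fact $m\le 2n\lfloor n/2\rfloor-2$) can possibly fail.

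I expect no real obstacle. The only points needing care are the $N\le1$ cases, where the theorem's hypothesis is not defined, and the observation that the minimum of $A$ is tied to the minimum of $B$ through $\alpha_1=m+\beta_1$.
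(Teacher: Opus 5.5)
Your proposal is correct and is exactly the derivation the paper intends; the paper states the corollary without a written proof. You apply Theorem~\ref{theorem:main-theorem} to the denominator and numerator multisets of \eqref{equation:stanton-fraction}, and use $\alpha_1 = m+\beta_1$, $\beta_1 \geq 1$ and $\beta_{N-1}, \beta_N \leq n$ to reduce its hypothesis to $2n\lfloor n/2\rfloor - 2 < m$, while also correctly handling the case $N \leq 1$, where $b_{N-1}$ in the theorem's hypothesis is undefined, which the paper leaves implicit.
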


From the lattice-theoretic point of view, we give sufficient criteria involving lower order ideals of the division lattice of positive integers (Proposition~\ref{proposition:quotient-ai-bi}). However, we also note that lattice-theoretic conditions cannot be necessary for a rational function of the form \eqref{equation:A-B-fraction} that is a polynomial to have non-negative coefficients. This is because certain CGFs are non-negative for \emph{analytic} reasons; we mention relations to Poincaré multipliers and normal distributions.

\section{Preliminaries}\label{section:preliminaries}
In this section we summarise preliminaries that will be used in the rest of the paper. We recall basic facts about cyclotomic polynomials, CGFs, the Frobenius coin problem and regular sequences. We also introduce notation for multisets.

The symbols $\mathbb{Z}_+$ and $\mathbb{Z}_{\geq 0}$ will always denote the positive and non-negative integers, respectively.

\subsection{Multisets}
We use square brackets $[$ and $]$ for multisets (e.g., $[1, 1, 2, 3]$).

For a multiset $S$, we write $\toset(S)$ for its underlying set, so $\toset([1, 1, 2, 3]) = \{1, 2, 3\}$. We also write $\# S$ for its \emph{multiset cardinality}, so $\# [1, 1, 2, 3] = 4$.

For an indexed multiset $S = [s_1, \ldots, s_n]$ and a subset of indices $I \subseteq \{1, \ldots, n\}$, we define $S_I \coloneq [s_i: i \in I]$.

For two multisets $S_1$ and $S_2$, we write $S_1 \uplus S_2$ or $\biguplus_{i = 1}^2 S_i $ for \emph{multiset} or \emph{additive union} (so $[1, 1, 2, 3] = [1, 2] \uplus [1, 3]$) and $S_1 \setminus S_2$ for \emph{multiset difference} (so $[1, 3] = [1, 1, 2, 3] \setminus [1, 2, 4]$).

Finally, for a multiset $S$ and a set $S'$, we define $S~\overline{\cap}~S' \coloneq [s \in S: s \in S']$. For example, $[1, 1, 2] = [1, 1, 2, 3]~\overline{\cap}~\{1, 2\}$.

\subsection{Cyclotomic polynomials}
For $n \in \ZZ_{+}$, define $D(n) \coloneq \{d: d \mid n\}$, $P(n) = \{ p \text{ prime}:p \mid n \}$ and $\omega(n) \coloneq \# P(n)$. We write $\rad(n)$ for the radical of $n$.

Let $n \in \ZZ_{+}$. The \emph{cyclotomic polynomial~$\Phi_n(q)$ of order $n$} is defined by
\begin{equation*}
    \Phi_n(q) \coloneq \prod_{\substack{1 \leq k \leq n \\ \gcd(n, k)=1}} (q-e^{2 \pi i k / n}).
\end{equation*}
For $f(q) = a_0 + a_1x + \cdots + a_nx^n \in \mathbb{Z}[q]$, define $A(n) = \max_{i = 1, \ldots, n} |a_i|$. The polynomial $f(q)$  is called \emph{flat} if $|A(n)| \leq 1$. Furthermore, it is well-known that 
\[
1-q^{n} = \prod_{d|n} \Phi_d(q).
\]
We are also going to use the following standard properties of cyclotomic polynomials.
\begin{proposition}\label{proposition:cyclotomic-polynomials}
    \begin{enumerate}[1.]
        \item $\Phi_n(q)$ is flat if $n$ has at most two distinct odd prime factors \cite{migotti-theorie}. In particular, it is flat if $\omega(n) \leq 2$.
        \item $\Phi_n(q)$ has at least one negative coefficient if $\omega(n) \geq 2$.
        \item $\Phi_p(q) = \frac{1 - q^p}{1 - q} = 1 + q + q^2 + \cdots + q^{p - 1}$ if $p$ prime.
        \item $\Phi_n(q) = \Phi_{\rad(n)}(q^{n/\rad(n)})$.
        \item $\Phi_n(q)\Phi_{np}(q) = \Phi_n(q^p)$ if $p$ prime and $p \nmid n$.
        \item $\Phi_{np}(q) = \Phi_n(q^p)$ if $p$ prime and $p \mid n$.
    \end{enumerate}
\end{proposition}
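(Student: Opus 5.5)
We would prove the six items in the order 3, 5, 6, 4, 2, with item 1 quoted from the literature. For item 1 there is nothing to prove beyond citing Migotti \cite{migotti-theorie}, who shows that $\Phi_n(q)$ is flat whenever $n$ has at most two distinct odd prime factors. The ``in particular'' clause is then immediate: if $\omega(n) \leq 2$, then $n$ has at most two distinct prime factors, hence at most two odd ones.

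Item 3 follows from the identity $1-q^p = \prod_{d \mid p}\Phi_d(q) = \Phi_1(q)\Phi_p(q)$ together with $\Phi_1(q) = q-1$; signs are absorbed by writing $1-q^p = -(q^p-1)$.

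For items 5 and 6 we compare roots, using three facts: all polynomials involved are monic, each $\Phi_m$ is separable with roots exactly the primitive $m$-th roots of unity, and $\deg \Phi_m = \varphi(m)$. The roots of $\Phi_n(q^p)$ are the $\zeta$ such that $\zeta^p$ is a primitive $n$-th root of unity, and $\Phi_n(q^p)$ is separable since its derivative only vanishes at $0$.
\begin{itemize}
\item If $p \mid n$, every such $\zeta$ has order exactly $np$. Conversely, every primitive $np$-th root of unity arises this way. Since both sides are monic of degree $p\varphi(n) = \varphi(np)$, we get item 6.
\item If $p \nmid n$, such a $\zeta$ has order either $n$ or $np$. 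Both kinds occur, because raising to the $p$-th power permutes the primitive $n$-th roots of unity. The degrees match: $p\varphi(n) = \varphi(n) + \varphi(np)$, since $\varphi(np) = (p-1)\varphi(n)$. This gives item 5.
\end{itemize}
Item 4 follows from item 6 by induction on $n/\rad(n)$. Write $n = mp$ with $p \mid m$. Then $\rad(m) = \rad(n)$, and by induction $\Phi_n(q) = \Phi_m(q^p) = \Phi_{\rad(n)}\bigl(q^{m/\rad(n)\cdot p}\bigr)$.

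Item 2 is the only one needing a small argument. Let $\omega(n) \geq 2$. We first compute $\Phi_n(1) = 1$. Evaluate $1+q+\cdots+q^{n-1} = \prod_{d \mid n,\, d>1}\Phi_d(q)$ at $q=1$ to get $n = \prod_{d \mid n,\, d>1}\Phi_d(1)$. By item 4 and item 3, $\Phi_{p^k}(1) = \Phi_p(1) = p$, so the prime-power divisors $d$ already contribute a factor of exactly $n$. Each remaining factor $\Phi_d(1)$ is a positive integer, because $\Phi_d$ is monic with no real roots for $d > 2$. Hence all remaining factors equal $1$; in particular $\Phi_n(1) = 1$.

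Next we show that the constant and leading coefficients are both $1$. The polynomial is monic, and its constant term is $(-1)^{\varphi(n)}\prod\zeta = 1$. This uses two facts: $\varphi(n)$ is even, and the roots are closed under inversion, so their product is $1$.

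Finally, $\deg\Phi_n = \varphi(n) \geq 2$, so the constant and leading coefficients are distinct coefficients. Together they contribute $2$ to the coefficient sum $\Phi_n(1) = 1$. Therefore some other coefficient must be negative.

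The only mildly delicate step is the positivity of $\Phi_d(1)$ and the bookkeeping in this product formula, so that is where we would be most careful.
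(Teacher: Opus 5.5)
The paper gives no proof of this proposition. It lists the items as standard facts and cites Migotti only for item 1. Your proposal is therefore a self-contained supplement, and it is correct in substance. The order/root comparisons for items 5 and 6, the induction on $n/\rad(n)$ for item 4, and the coefficient-sum argument for item 2 are the standard proofs. A few local statements need repair.

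\emph{Separability in items 5 and 6.} Your claim that $\Phi_n(q^p)$ is separable ``since its derivative only vanishes at $0$'' is false. The derivative is $pq^{p-1}\Phi_n'(q^p)$; for example, $\frac{d}{dq}\Phi_3(q^2)=4q^3+2q=2q(2q^2+1)$. There are two easy fixes.
\begin{itemize}
\item Note that $\Phi_n(q^p)$ divides $q^{np}-1$, and \emph{that} polynomial's derivative vanishes only at $0$.
\item Or avoid separability of the left side entirely. The right-hand sides $\Phi_{np}$ and $\Phi_n\Phi_{np}$ are separable; for item 5 this holds because primitive $n$-th and $np$-th roots have different orders. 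All their roots are roots of $\Phi_n(q^p)$, so they divide it. Monicity and equal degrees then force equality.
\end{itemize}

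\emph{The sign in item 3.} Start from $q^p-1=\Phi_1(q)\Phi_p(q)=(q-1)\Phi_p(q)$. The paper's displayed identity $1-q^n=\prod_{d\mid n}\Phi_d(q)$ is off by a sign for its own definition, since $\Phi_1(q)=q-1$. So ``absorbing signs'' really means correcting that formula.

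\emph{The constant term in item 2.} Closure of the roots under inversion gives product $1$ only because no root is its own inverse, i.e.\ no root is $\pm1$, which holds for $n>2$. Alternatively, $\Phi_n(0)=1$ for $n\ge 2$ follows by evaluating $q^n-1=\prod_{d\mid n}\Phi_d(q)$ at $q=0$ and inducting on $n$.

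For the step you flagged as delicate, you can bypass integrality and positivity of the $\Phi_d(1)$ by using items 5 and 6 directly.
\begin{itemize}
\item If $\omega(n)\ge 2$, write $n=p^k m$ with $p\nmid m$ and $m>1$.
\item Then $\Phi_n(q)=\Phi_{pm}(q^{p^{k-1}})$ and $\Phi_{pm}(q)=\Phi_m(q^p)/\Phi_m(q)$.
\item Hence $\Phi_n(1)=\Phi_m(1)/\Phi_m(1)=1$, because $\Phi_m(1)\neq 0$ for $m>1$.
\end{itemize}

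Finally, Migotti's theorem is usually stated for binary polynomials $\Phi_{pr}$ with $p,r$ odd primes. The version quoted in item 1 follows from it via item 4 and the identity $\Phi_{2m}(q)=\Phi_m(-q)$ for odd $m>1$, which comes from item 5 with $p=2$. Neither step changes the absolute values of the coefficients.
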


\subsection{Cyclotomic generating functions (CGFs)}
In \cite{billey-swanson-cyclotomic}, the authors give several equivalent definitions for a CGF. We list the ones that will be useful to us.
\begin{theorem}\cite[Theorem 1]{billey-swanson-cyclotomic}
    Let $f(q)$ be a non-zero polynomial with non-negative integer coefficients in the variable $q$.
    Then, the following are equivalent:
    \begin{enumerate}[1.]
        \item $f(q)$ is a CGF.
        \item (Rational form.) $f(q)$ is of the form \eqref{equation:CGF-def}.
        \item (Complex form.) The complex roots of $f(q)$ are all either roots of unity or zero.
        \item (Cyclotomic form.) The polynomial~$f(q)$ can be written as a positive integer times a product of cyclotomic polynomials and factors of $q$.
    \end{enumerate}
\end{theorem}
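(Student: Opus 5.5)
The plan is to prove the cycle of implications $1 \Leftrightarrow 2 \Rightarrow 3 \Rightarrow 4 \Rightarrow 2$. The equivalence $1 \Leftrightarrow 2$ holds by definition, since a CGF was defined as a non-zero polynomial with non-negative integer coefficients of the form \eqref{equation:CGF-def}. Two facts drive the rest: $1-q^n=\prod_{d\mid n}\Phi_d(q)$, and the observation that $f(1)>0$ for any non-zero polynomial with non-negative coefficients.

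For $2 \Rightarrow 3$, I will look at the rational form. Every root of $1-q^{a_i}$ is a root of unity, so the numerator $\alpha q^{\beta}\prod_i (1-q^{a_i})$ has only zero and roots of unity as roots. Since $f$ is a polynomial and $f(q)\prod_i(1-q^{b_i})$ equals this numerator, every root of $f$ is a root of the numerator. So every root of $f$ is $0$ or a root of unity.

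For $3 \Rightarrow 4$, I will factor $f$ over $\mathbb{Q}$. Each root of unity is a root of exactly one $\Phi_d$, and $\Phi_d$ is irreducible and is the minimal polynomial of its roots. Hence $f = c\, q^{\beta}\prod_d \Phi_d(q)^{e_d}$ for some $c\in\mathbb{Q}$ and $e_d \ge 0$. Each $\Phi_d$ is monic with integer coefficients, so $c$ equals the leading coefficient of $f$. That coefficient is a non-negative integer and is non-zero because $f\neq 0$, so $c$ is a positive integer.

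For $4 \Rightarrow 2$, Möbius inversion gives, for $d\ge 2$,
\[
\Phi_d(q)=\prod_{e\mid d}(1-q^{e})^{\mu(d/e)}.
\]
There is no sign issue here: $\Phi_d=\prod_{e\mid d}(q^e-1)^{\mu(d/e)}$, and $\sum_{e\mid d}\mu(d/e)=0$ for $d>1$. The factor $\Phi_1(q)=q-1$ cannot occur in $f$. Otherwise $f(1)=0$, which contradicts $f(1)>0$.

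Substituting these expressions writes $f$ as $c\,q^{\beta}\prod_i(1-q^{a_i})\big/\prod_j(1-q^{b_j})$ with finitely many factors, after cancelling common factors. The main point needing care is that the numerator and denominator have the same number of factors. Each factor $1-q^{a}$ vanishes to order exactly $1$ at $q=1$. So the order of vanishing of $f$ at $1$ equals (number of numerator factors) minus (number of denominator factors). Because $f(1)\neq 0$, this order is $0$, so the counts agree and $f$ is of the form \eqref{equation:CGF-def}.
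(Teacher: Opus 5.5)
The paper gives no proof of this statement; it simply quotes it from \cite[Theorem 1]{billey-swanson-cyclotomic}, so there is no in-paper argument to compare against. Your proof is correct and self-contained.
\begin{itemize}
\item $1\Leftrightarrow 2$ is exactly the paper's definition of a CGF.
\item $2\Rightarrow 3$ reads the roots off the rational form. This tacitly uses $\alpha\neq 0$, which holds because $f\neq 0$.
\item $3\Rightarrow 4$ rests on the irreducibility of $\Phi_d$ over $\mathbb{Q}$ together with $f\in\mathbb{Z}[q]$.
\end{itemize}
The hypothesis $f\in\mathbb{Z}_{\ge 0}[q]$ is used where it is actually needed. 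In $3\Rightarrow 4$ it makes the leading coefficient $c$ positive. In $4\Rightarrow 2$ it gives $f(1)>0$, which both excludes $\Phi_1$ and forces the numerator and denominator to have equally many factors.

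The equal-count conclusion can also be checked factor by factor: for $d\ge 2$ one has $\sum_{e\mid d}\mu(d/e)=0$, so each $\Phi_d$ contributes as many factors $1-q^{e}$ to the numerator as to the denominator. Your order-of-vanishing argument at $q=1$ is the global version of the same fact.

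One small slip: the identity $1-q^n=\prod_{d\mid n}\Phi_d(q)$, which you list as a driving fact, is off by a sign. Since $\Phi_1(q)=q-1$, the correct identity is $q^n-1=\prod_{d\mid n}\Phi_d(q)$. The paper's preliminaries state it the same wrong way. Your M\"obius step already uses the correct version, so nothing in the argument is affected.
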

The next lemma follows immediately from the definition in terms of cyclotomic polynomials.
\begin{lemma}\label{lemma:polynomial-iff-multiset-containment}
    Let $a_1, \ldots, a_m, b_1, \ldots, b_n \subset \mathbb{Z}_+$ with $m \geq n$. The following are equivalent:
    \begin{enumerate}[1.]
        \item The quotient
        \[
        \frac{\prod_{i = 1}^m 1 - q^{a_i}}{\prod_{j = 1}^n 1 - q^{b_i}} \in \mathbb{Z}[q].
        \]
        \item The inclusion \[
        \biguplus_{i = 1}^{n} D(b_i) \subseteq \biguplus_{i = 1}^{m} D(a_i)
        \]
        as multisets holds.
    \end{enumerate}
\end{lemma}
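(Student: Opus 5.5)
We prove the equivalence straight from the cyclotomic factorisation $1-q^{c}=\prod_{d \mid c}\Phi_d(q)$, recalled just before the statement. Write
\[
P(q) \coloneq \prod_{i=1}^m (1-q^{a_i}) = \prod_{d \geq 1} \Phi_d(q)^{\alpha_d}, \qquad Q(q) \coloneq \prod_{j=1}^n (1-q^{b_j}) = \prod_{d \geq 1} \Phi_d(q)^{\beta_d},
\]
where $\alpha_d = \#\{i : d \mid a_i\}$ and $\beta_d = \#\{j : d \mid b_j\}$. These are exactly the multiplicities of $d$ in the multisets $\biguplus_{i} D(a_i)$ and $\biguplus_j D(b_j)$. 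So condition 2 says precisely that $\beta_d \leq \alpha_d$ for every $d \in \mathbb{Z}_+$.

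For (2 $\Rightarrow$ 1), assume $\beta_d \leq \alpha_d$ for all $d$. Then $P/Q = \prod_d \Phi_d^{\alpha_d-\beta_d}$. This is a finite product of the monic integer polynomials $\Phi_d$ with non-negative exponents, so it lies in $\mathbb{Z}[q]$.

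For (1 $\Rightarrow$ 2), suppose $P/Q = R \in \mathbb{Z}[q]$, so $P = QR$ in $\mathbb{Q}[q]$. We use that the $\Phi_d$ are irreducible over $\mathbb{Q}$ and pairwise distinct; they are pairwise coprime since their root sets, the primitive $d$-th roots of unity, are disjoint. Then unique factorisation in $\mathbb{Q}[q]$ gives that the exponent of $\Phi_d$ in $P$ equals its exponent in $Q$ plus its exponent in $R$. Hence $\alpha_d \geq \beta_d$ for all $d$, which is condition 2.

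Alternatively, irreducibility can be avoided. Compare multiplicities of the root $e^{2\pi i/d}$: it is a simple root of $\Phi_d$ and not a root of any other $\Phi_{d'}$. Its multiplicity in $P$ is $\alpha_d$ and in $Q$ is $\beta_d$, and polynomiality of $P/Q$ forces $\beta_d \leq \alpha_d$.

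The hypothesis $m \geq n$ is not needed for the argument. There is no real obstacle. The only point requiring care is identifying the multiset multiplicities with the exponents of $\Phi_d$, which is immediate from the divisor-product formula.
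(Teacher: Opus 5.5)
Your proof is correct and is the argument the paper intends when it says the lemma ``follows immediately'' from the cyclotomic factorisation: you identify the multiplicity of $d$ in $\biguplus_i D(a_i)$ and $\biguplus_j D(b_j)$ with the exponent of $\Phi_d$ and compare exponents, via unique factorisation or root multiplicities. One cosmetic point, inherited from the paper: with $\Phi_1(q)=q-1$ one has $1-q^c=-\prod_{d\mid c}\Phi_d(q)$, so your identities hold only up to a sign $\pm 1$, which does not affect membership in $\mathbb{Z}[q]$.
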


\subsection{The Frobenius coin problem}

Let $S = \{s_1, \ldots, s_n\} \subset \mathbb{Z}_+$ with $s_i < s_{i + 1}$ for all $i = 1, \ldots, n - 1$. The \emph{numerical semigroup} generated by $S$ is
\[
    \langle S \rangle_{\text{semigroup}} \coloneq \{ c_1s_1 + c_2s_2 + \cdots + c_ns_n: c_i \in \mathbb{Z}_{\geq 0}\}.
\]

Numerical semigroups are intimately related to the \emph{Frobenius coin problem}, which informally is about finding the monetary amounts that we can make with coins of given denominations. The classical formulation is actually to find the largest monetary amount that we \emph{cannot} make with these coins, but we will use the following stronger formulation.

\begin{problem}[Frobenius coin problem]
Assume $\gcd(S) = 1$. Determine $\mathbb{Z} \cap \langle S \rangle_{\text{semigroup}}$. In other words, the problem is to find all integers that are representable as a linear combination of elements of $S$ with non-negative integer coefficients.
\end{problem}

If $\gcd(S) = 1$, the quantity
\[
F(S) \coloneq \max \{i \in \mathbb{Z}: i \notin \langle S \rangle_{\text{semigroup}}\}
\]
is finite and is called the \emph{Frobenius number} of $S$. In general, giving a simple closed formula for $F(S)$ is only possible if $\# S = 2$. This being said, there are many known non-strict upper bounds for $F(S)$. One of the most famous such bounds is given by the following proposition.

\begin{proposition}[Selmer's bound \cite{selmer-linear}]\label{proposition:selmer-bound}
    If $\gcd(S) = 1$, then
    \[
    F(S) \leq 2s_{n - 1} \left \lfloor \frac{s_1}{n} \right \rfloor - s_1.
    \]
\end{proposition}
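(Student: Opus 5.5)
We would prove the bound by working in the residue classes modulo the smallest generator, via the Apéry set of the semigroup.

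Put $a = s_1$ and $T = \langle S \rangle_{\text{semigroup}}$. For each residue $r \in \mathbb{Z}/a\mathbb{Z}$, let $w(r)$ be the least element of $T$ that is $\equiv r \pmod a$; it exists because $\gcd(S) = 1$. An integer $x$ lies in $T$ exactly when $x \geq w(x \bmod a)$. Hence
\[
F(S) = \max_r w(r) - a,
\]
and it suffices to show that every residue $r$ is represented by an element of $T$ of size at most $2 s_{n-1} \lfloor a/n \rfloor$.

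The key step is a sumset-growth argument in $G = \mathbb{Z}/a\mathbb{Z}$. Let $E = \{0, \bar s_2, \ldots, \bar s_n\} \subseteq G$, and let $A_k = kE$ be the set of residues of sums of at most $k$ generators taken from $s_2, \ldots, s_n$. Since $\gcd(S) = 1$, the set $E$ generates $G$, so the chain $A_0 \subseteq A_1 \subseteq \cdots$ increases strictly until it equals $G$.

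We would combine Kneser's theorem with a Scherk-type argument to show growth by roughly $\# E - 1$ per step. Any stabilizing subgroup $H$ would force $E$ into few cosets of $H$, and this is incompatible with $E$ generating $G$ before $A_k = G$. This shows that $A_h = G$ for some
\[
h \leq \left\lceil \frac{a-1}{n-1} \right\rceil \leq 2 \left\lfloor \frac{a}{n} \right\rfloor .
\]
The second inequality is elementary, by writing $a = qn + t$ with $0 \leq t < n$. If two of the $s_i$ coincide modulo $a$, the redundant generators can be discarded, and the bound only improves after an induction on $n$.

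The main obstacle is to get $s_{n-1}$ rather than $s_n$ in the final bound. The naive estimate gives $w(r) \leq h\, s_n$, because each of the $h$ summands is at most $s_n$. To improve it, we would first build $A'_k$ using only $s_2, \ldots, s_{n-1}$, so that each summand is at most $s_{n-1}$, and then use $s_n$ at most once, as a single translate. The required growth statement becomes $A'_h \cup (A'_{h-1} + \bar s_n) = G$, with total size bounded by $(h-1)s_{n-1} + s_n$. Since $s_n$ may be reduced modulo $a$ without loss (replacing $s_n$ by a smaller representative only shrinks $T$'s complement when $s_n > a\,s_{n-1}$-type redundancy occurs), the extra $s_n$ term should be absorbed into the factor $2$ of the bound. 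This careful accounting is where we expect the real work to lie. If it fails, the fallback is to cite Selmer's original argument for this refinement.

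Subtracting $a$ from the bound on $\max_r w(r)$ then yields $F(S) \leq 2 s_{n-1} \lfloor s_1/n \rfloor - s_1$.
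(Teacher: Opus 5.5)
No proof can work here, because the statement as printed is false. The paper gives no proof of its own, only a citation, and Selmer's theorem is a different inequality. For $n=2$ one has $s_{n-1}=s_1$, so the right-hand side $2s_1\lfloor s_1/2\rfloor-s_1$ does not involve $s_2$. Yet $F(\{s_1,s_2\})=s_1s_2-s_1-s_2$ grows with $s_2$; for $S=\{2,5\}$ you get $F(S)=3>2$. A minimally generated example with $n=3$ is $S=\{4,5,11\}$. Its Ap\'ery set with respect to $4$ is $\{0,5,10,11\}$, so $F(S)=7>6=2\cdot 5\cdot\lfloor 4/3\rfloor-4$.

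Selmer's actual bound has the largest generator in place of $s_{n-1}$: $F(S)\le 2s_n\lfloor s_1/n\rfloor-s_1$. The bound involving $s_{n-1}$ is Erd\H{o}s--Graham's $2s_{n-1}\lfloor s_n/n\rfloor-s_n$, so the printed formula appears to mix the two. The correct form is also what the paper actually uses in the proof of Theorem~\ref{theorem:main-theorem}, with $\max(\toset(B_I))$.

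So the step you flag as the main obstacle, trading $s_n$ for $s_{n-1}$, is exactly where your argument must break, and careful accounting cannot rescue it. Your covering claim $A'_h\cup(A'_{h-1}+\bar s_n)=G$ already fails for $n=2$. There $A'_k=\{0\}$, and every residue must be reached through multiples of $s_n$; for $S=\{3,4\}$ the residue $2$ is missing and $w(2)=8=2s_2$. Your justification for reducing $s_n$ modulo $a$ also runs backwards. Replacing $s_n$ by a smaller element of its residue class enlarges the semigroup and lowers the Frobenius number, so a bound for the modified set says nothing about $F(S)$. For $S=\{2,5\}$, the ``absorption into the factor $2$'' would require $5=s_n\le 2s_1\lfloor s_1/2\rfloor=4$.

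Even if you aim at the correct bound, your sumset step needs repair. Kneser's theorem gives growth $\#E-1$ per step only when the stabilizer of the new sumset is trivial. If the stabilizer is a non-trivial subgroup $H$, then $E$ can meet just two cosets of $H$ and still generate $G$, and the growth drops to $\#H$. For example, take $a=10$ and $E=\{0,1,5,6\}$, coming from $S=\{10,11,15,16\}$. Then $hE=\{0,1,\dots,h\}+\{0,5\}$, which equals $G$ only for $h\ge 4>3=\lceil 9/3\rceil$. Correspondingly $w(9)=49=3\cdot 11+16>3s_n$, so even your naive estimate with your value of $h$ fails.

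The factor $2$ in Selmer's bound is what pays for such periodic behaviour. What you would actually need is $hE=G$ for some $h\le 2\lfloor s_1/n\rfloor$ directly. Combined with your naive estimate $w(r)\le h\,s_n$, that gives Selmer's true bound $F(S)\le 2s_n\lfloor s_1/n\rfloor-s_1$.

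Finally, you genuinely need a hypothesis such as $S$ being a minimal generating set. That makes $s_2,\dots,s_n$ have distinct non-zero residues modulo $s_1$, which forces $s_1\ge n$. Your reduction ``discard redundant generators, the bound only improves'' goes the wrong way, since lowering $n$ can only raise $\lfloor s_1/n\rfloor$. For $S=\{2,3,4\}$ the right-hand side is $-2<1=F(S)$. For $S=\{6,7,12,13\}$, even the correct Selmer bound gives $20<29=F(S)$.
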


If $\gcd(S) \neq 1$, we will find it useful to consider the quantity $F(S/\gcd(S))$ instead.

For an excellent book about the Frobenius coin problem, see \cite{ramirez-alfonsin-diophantine}.

\subsection{Regular sequences}
Let $R \coloneq \mathbb{C}[x_1, \ldots, x_n]$ be the ring of polynomials in the variables $x_i$ with coefficients in $\mathbb{C}$. Let $f_1, \ldots, f_m \in R$ be a sequence of non-zero homogeneous polynomials. Then, the polynomials $f_1, \ldots, f_m$ form a \emph{regular sequence} of homogeneous elements if $f_i$ is not a zero-divisor in $R/(f_1 ,\ldots, f_{i-1})$ for all $i = 1, \ldots, m$. While regular sequences are defined for any commutative ring, since $R$ is \emph{Cohen-Macaulay}, the definition is equivalent to requiring that $\dim(V(f_1, \ldots, f_m)) = n - m$. For a very thorough treatment of these topics, see \cite{bruns-herzog-cohen-macaulay}.

\section{The algebro-geometric point of view}\label{section:algebraic-geometry}

In this section we give a necessary and sufficient combinatorial condition for the existence of regular sequences on graded polynomial rings with given sequences of degrees. This condition is given in terms of a family of Frobenius coin problems. To do so, we rely heavily on ideas in \cite[Theorem 8.7]{iano-fletcher-working} and \cite[Proposition 3.1]{pizzato-sano-tasin-effective}, who first introduced this as a criterion for quasismoothness of \emph{weighted complete intersections (WPIs)}, which is the corresponding object in projective algebraic geometry. We use \cite[Corollary 3.3]{stanley-hilbert} to translate this into a sufficient condition for the non-negativity of cyclotomic generating functions (CGFs). As a consequence, we solve Problem \ref{problem:problem-bs} and prove most cases of Conjectures \ref{conjecture:conjecture-stanton}, \ref{conjecture:conjecture-gk} and \ref{conjecture:conjecture-stanton-gk}.

We work over $\mathbb{C}$. For a sequence of degrees $b_1, \ldots, b_n \in \mathbb{Z}_+$, we set $\mathbb{P} \coloneq \mathbb{P}(b_1, \ldots, b_n)$ to be the \emph{weighted projective space} $\Proj~\mathbb{C}[x_1, \ldots, x_n] $ with $\deg(x_i) = b_i$. The space $\mathbb{P}$ is a projective toric variety of dimension $n - 1$.

For each degree $a \in \mathbb{Z}_+$, the reflexive rank-one sheaf $\mathcal{O}_{\mathbb{P}}(a)$ has a corresponding \emph{complete linear system}
\[
|\mathcal{O}_{\mathbb{P}}(a)| \coloneq \mathbb{P}(H^0(\mathbb{P}, \mathcal{O}_{\mathbb{P}}(a)))= \mathbb{P}(\{p \in \mathbb{C}[x_1, \ldots, x_n]: p \text{ homogeneous},~\deg(p) = a\}).
\]

The \emph{base locus} of $|\mathcal{O}_{\mathbb{P}}(a)|$ is the set of points in $\mathbb{P}$ where all elements of $|\mathcal{O}_{\mathbb{P}}(a)|$ vanish. 

For a subset of indices $I \subseteq \{1, \ldots, n\}$, we define the \emph{closed stratum}
\[
\Pi_I = \{x \in \mathbb{P}: i \notin I \implies x_i = 0 \} \cong \mathbb{P}(b_i: i \in I),
\]
and the \emph{open toric stratum}
\[
\Pi_I^{\circ} = \{x \in \mathbb{P}: i \in I \iff x_i \neq 0 \}.
\]

\begin{remark}
    Usually one works with \emph{well-formed} weighted projective spaces, meaning that
    \[
    \gcd(b_1, \ldots, b_{i - 1}, b_{i + 1}, \ldots, b_n) = 1
    \]
    for all $i = 1, \ldots, n$. Every weighted projective space is isomorphic to a well-formed one; the weights of the latter are calculated via a reduction algorithm. However, this reduction algorithm does \emph{not} generally preserve the sheaves $\mathcal{O}_{\mathbb{P}}(a)$. Indeed, the first step involves the natural isomorphism
    \[
    f: \mathbb{P} = \mathbb{P}(b_1, \ldots, b_n) \xrightarrow{\sim} \mathbb{P}' \coloneq \mathbb{P}(b_1/d, \ldots, b_n/d),
    \]
    where $d = \gcd(b_1, \ldots, b_n)$. Even though
    \[
    f^*\mathcal{O}_{\mathbb{P}'}(a) \cong \mathcal{O}_{\mathbb{P}}(da)
    \]
    for all $a \in \mathbb{Z}_+$, if $a < d$ and $a \in \langle b_1/d, \ldots, b_n/d \rangle_{\text{semigroup}}$, we have that $\dim(|\mathcal{O}_{\mathbb{P}'}(a)|) \geq 0$ but $|\mathcal{O}_{\mathbb{P}}(a)| = \emptyset$. Since the toric strata $\Pi_I$ may not be well-formed, we work with general weighted projective spaces and take special care in making sure that our complete linear systems are not empty.
\end{remark}

For another sequence of degrees $a_1, \ldots, a_m \in \mathbb{Z}_+$ with $m \leq n$, a \emph{weighted complete intersection (WCI)} $X_{a_1, \ldots, a_m} \subset \mathbb{P}$ is the vanishing set of a regular sequence of homogeneous elements $f_1, \ldots, f_m \in \mathbb{C}[x_1, \ldots, x_n]$ with $\deg(f_i) = a_i$. In other words,
\[
X_{a_1, \ldots, a_m} = \Proj~\frac{\mathbb{C}[x_1, \ldots, x_n]}{(f_1, \ldots, f_m)} = V(f_1, \ldots, f_m)
\]
is a projective variety of dimension $n - m - 1$. If $m = n$, then $X_{a_1, \ldots, a_n}$ is an \emph{empty} weighted complete intersection, but
\[
\frac{\mathbb{C}[x_1, \ldots, x_n]}{(f_1, \ldots, f_n)}
\]
is an Artinian ring and a finite-dimensional vector space. 

We will need the following technical lemma:

\begin{lemma} \label{lemma:technical-lemma}
    Let $b_1, \ldots, b_n \in \mathbb{Z}_+$ and $I \subseteq \{1, \ldots, n\}$. Consider the weighted projective space $\mathbb{P} \coloneq \mathbb{P}(b_1, \ldots, b_n)$ and the strata $\Pi_I$ and $\Pi_I^{\circ}$. Then:
    \begin{enumerate}[1.]
        \item Both $\Pi_I$ and $\Pi_I^{\circ}$ have dimension $\# I - 1$.
        \item $\overline{\Pi_I^{\circ}} = \Pi_I$ in the Zariski topology.
        \item Let $a \in \mathbb{Z}_+$. Then, a general element of the finite-dimensional complete linear system $|\mathcal{O}_{\mathbb{P}}(a)|$ restricts to a general element of $|\mathcal{O}_{\Pi_I}(a)|$.
        \item Let $a_1, \ldots, a_{\#I} \in \mathbb{Z}_+$, and assume that the linear systems $|\mathcal{O}_{\Pi_I}(a_i)|$ are base-point-free on $\Pi_I^{\circ}$ for all $i = 1, \ldots, \# I$. Then, cutting $\Pi_I^{\circ}$ by a choice of general elements $f_i \in |\mathcal{O}_{\Pi_I}(a_i)|$ for all $i = 1, \ldots, \# I$ results in the empty set.
    \end{enumerate}
\end{lemma}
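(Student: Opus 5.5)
The four items are handled one at a time; the first three are toric and elementary, and only the fourth needs a genuine argument. For items 1 and 2, I would use that $\Pi_I \cong \mathbb{P}(b_i : i \in I)$ is a weighted projective space of dimension $\#I - 1$. Inside it, $\Pi_I^{\circ}$ is the complement of the coordinate hyperplanes $\{x_i = 0\}$ for $i \in I$. Hence $\Pi_I^{\circ}$ is the dense open torus orbit, namely the quotient of $(\mathbb{C}^*)^{\#I}$ by the weighted $\mathbb{C}^*$-action. It is non-empty, irreducible and of dimension $\#I - 1$. Since $\Pi_I$ is irreducible, being the image of $\mathbb{C}^{\#I} \setminus \{0\}$, any non-empty open subset is dense, so $\overline{\Pi_I^{\circ}} = \Pi_I$.

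For item 3, I would note that restricting to $\Pi_I$ is the ring map $\mathbb{C}[x_1, \ldots, x_n] \to \mathbb{C}[x_i : i \in I]$ setting $x_j = 0$ for $j \notin I$. In degree $a$ this is a surjective linear map $V \to W$ of finite-dimensional vector spaces, since monomials in the $x_i$ with $i \in I$ lift to themselves. A surjective linear map is open and dominant on affine spaces. So the preimage of any dense open subset of $W$ is dense open in $V$, and the image of a dense open subset of $V$ contains a dense open subset of $W$. This is exactly the statement that a general element restricts to a general element. If $W = 0$ the system is empty and there is nothing to prove.

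For item 4, I would argue by induction on $k$, cutting successively by general $f_1, \ldots, f_k$. The claim is that $Z_k := \Pi_I^{\circ} \cap V(f_1, \ldots, f_k)$ has every irreducible component of dimension $\#I - 1 - k$, or is empty. The key tool is a Bertini/Kleiman-type dimension count: the linear system $|\mathcal{O}_{\Pi_I}(a_{k+1})|$ is base-point-free on $\Pi_I^{\circ}$. Hence for each irreducible component $C$ of $Z_k$, the sections vanishing identically on $C$ form a proper linear subspace, since any point of $C$ has a non-vanishing section. A general $f_{k+1}$ therefore avoids the finitely many such subspaces, and its zero locus cuts each component in dimension exactly one less, or not at all. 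After $\#I$ steps the dimension would be $-1$, so $Z_{\#I} = \emptyset$.

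The main obstacle is making this count rigorous on the quotient $\Pi_I^{\circ}$, where $\mathcal{O}(a)$ need only be a reflexive sheaf, so that "$f$ vanishes at a point" must be interpreted carefully. To handle this I would lift to the affine cone. Work on $T = (\mathbb{C}^*)^{I}$, the preimage of $\Pi_I^{\circ}$, where $\mathbb{C}^*$-invariant closed sets correspond to closed sets downstairs with dimensions shifted by one. There the statement is the honest one about polynomials: base-point-freeness means that for each $t \in T$ some degree-$a_i$ polynomial satisfies $p(t) \ne 0$. The dimension count above then applies verbatim to the regular functions $f_i$ on $T$. We get $\dim(T \cap V(f_1, \ldots, f_{\#I})) \le 0$, and since this set is $\mathbb{C}^*$-stable, with every orbit one-dimensional, it must be empty.
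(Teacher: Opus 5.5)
Your proposal is correct and follows the paper's proof. Items 1–3 rest on the same elementary observations, with item 3 using surjectivity and openness of the restriction map. Item 4 is the same induction: the sections vanishing on each irreducible component form a proper linear subspace, a general section avoids their finite union, and Krull's principal ideal theorem lowers the dimension by one at each step.

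Your one addition is to lift to the torus $T=(\mathbb{C}^*)^{I}$ and finish with the argument that every $\mathbb{C}^*$-orbit there is one-dimensional. This usefully pins down what ``vanishing'' means for sections of the possibly non-invertible $\mathcal{O}_{\Pi_I}(a)$, a point the paper passes over.
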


\begin{proof}
    \begin{enumerate}[1.]
        \item This follows from the definitions.
        \item Idem.
        \item The map $r: H^0(\mathbb{P}, \mathcal{O}_{\mathbb{P}}(a)) \to H^0(\Pi_I, \mathcal{O}_{\Pi_I}(a))$ is surjective. Since both linear systems are finite-dimensional, $r$ is an open map.
        \item We give a proof by induction. Assume that for $i < \# I$ we have chosen general elements $f_j \in |\mathcal{O}_{\Pi_I}(a_j)|$ for all $j = 1, \ldots, i$ and the conclusion holds. In other words,
        \[
        X_i \coloneq \Pi_I^{\circ} \cap V(f_1, \ldots, f_i)
        \]
        has dimension $\# I - i - 1$. The complete linear system $|\mathcal{O}_{\Pi_I}(a_{i + 1})|$ is base-point-free on $X_{i, k} \subset X_i \subset \Pi_I^{\circ}$, where the $X_{i, k}$ are the (finitely many) irreducible components of $X_i$, so its base locus does not contain $X_{i, k}$ for any $k$. For each $k$, the set $V_k \subset |\mathcal{O}_{\Pi_I}(a_{i + 1})|$ of elements vanishing on all of $X_{i, k}$ is a linear subspace. Furthermore, $V_k$ cannot be all of $|\mathcal{O}_{\Pi_I}(a_{i + 1})|$, since otherwise its base locus would contain $X_{i, k}$. Therefore, $V_k$ is a closed subset of $|\mathcal{O}_{\Pi_I}(a_{i + 1})|$ and so is $V \coloneq \cup_{k} V_k$. Choose $f_{i + 1} \in |\mathcal{O}_{\Pi_I}(a_{i + 1})| \setminus V$, a general element. The polynomial $f_{i + 1}$ does not vanish on any irreducible component $X_{i, k}$. By Krull's principal ideal theorem, the dimension of $X_i \cap V(f_{i + 1})$ is $\# I - i - 2$. In particular, for $i = \# I - 1$,
        \[
        \dim(X_{\# I}) = \dim(X_{\# I - 1} \cap V(f_{\# I})) = -1,
        \]
        so $X_{\# I}$ is empty.
    \end{enumerate}
\end{proof}

We note that (Q1) in \cite[Proposition 3.1]{pizzato-sano-tasin-effective} can easily be translated into a combinatorial statement involving a family of Frobenius coin problems. However, while they characterise \emph{general} quasismooth WCIs in well-formed weighted projective spaces, we use similar ideas to characterise the existence of at least one WCI in general weighted projective spaces. It turns out that doing this is relatively straightforward, and that (Q2) in \cite[Proposition 3.1]{pizzato-sano-tasin-effective} is redundant if both degree sequences are of the same length (i.e., if $m = n$). This gives a solution to Problem \ref{problem:problem-bs}.

\begin{proposition} \label{proposition:regular-sequence-iff-wci-iff-frobenius-coin-problems}
    Let $a_1, \ldots, a_n, b_1, \ldots, b_n \in \mathbb{Z}_+$. Let $A = [a_i]_{i = 1, \ldots, n}$ and $B = [b_i]_{i = 1, \ldots, n}$ be the corresponding multisets. Then, the following are equivalent:
    \begin{enumerate}[1.]
        \item There exists a regular sequence of homogeneous elements $f_1, \ldots, f_n$ of maximal length with $\deg(f_i) = a_i$ in $\mathbb{C}[x_1, \ldots, x_n]$ with $\deg(x_i) = b_i$. In other words, there exists an empty weighted complete intersection $X_{a_1, \ldots, a_n} \subset \mathbb{P}(b_1, \ldots, b_n)$. 
        \item For every subset of indices $I \subseteq \{1, \ldots, n\}$, the inequality~$\# (A~\overline{\cap}~\langle \toset(B_I) \rangle_{\text{semigroup}}) \geq \# I$ holds.
    \end{enumerate}
\end{proposition}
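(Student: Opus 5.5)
Both directions go through the same stratification of $\mathbb{P}=\mathbb{P}(b_1,\ldots,b_n)$ by the open toric strata $\Pi_I^{\circ}$. The basic observation is a description of the degree-$a$ part of $\mathbb{C}[x_1,\ldots,x_n]$ on a stratum. A monomial $\prod x_i^{c_i}$ of degree $a$ restricts to a non-zero function on $\Pi_I^{\circ}$ if and only if $c_i=0$ for all $i\notin I$. Such a monomial exists if and only if $a\in\langle \toset(B_I)\rangle_{\text{semigroup}}$. Moreover, a monomial in the variables $x_i$, $i\in I$, is nowhere zero on $\Pi_I^{\circ}$. So for every $a$ exactly one of two things happens. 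Either $a\in\langle\toset(B_I)\rangle_{\text{semigroup}}$, and then $|\mathcal{O}_{\Pi_I}(a)|$ is base-point-free on $\Pi_I^{\circ}$. Or every homogeneous polynomial of degree $a$ vanishes identically on $\Pi_I$.

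For $(1)\Rightarrow(2)$ I argue by contraposition. Suppose some $I$ has $k\coloneq\#(A~\overline{\cap}~\langle\toset(B_I)\rangle_{\text{semigroup}})<\#I$. Every $f_j$ with $a_j\notin\langle\toset(B_I)\rangle_{\text{semigroup}}$ vanishes on $\Pi_I$. Hence $V(f_1,\ldots,f_n)\cap\Pi_I$ is cut out inside $\Pi_I$ by only $k$ equations. By Krull's principal ideal theorem, applied to the affine cone $\mathbb{C}^{I}$ of dimension $\#I$, it has dimension at least $\#I-k\geq 1$. So $V(f_1,\ldots,f_n)\subset\mathbb{C}^n$ contains more than the origin. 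This contradicts the fact that a regular sequence of length $n$ has $\dim V(f_1,\ldots,f_n)=0$, which is the Cohen--Macaulay characterisation recalled in the preliminaries. Note that the case $\#I=0$ is vacuous.

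For $(2)\Rightarrow(1)$ I take $f_1,\ldots,f_n$ to be general elements of the complete linear systems $|\mathcal{O}_{\mathbb{P}}(a_i)|$. Each is non-empty: taking $I=\{1,\ldots,n\}$ in (2) gives $\#(A~\overline{\cap}~\langle\toset(B)\rangle)\geq n$, so every $a_i$ lies in the semigroup. It suffices to show that $V(f_1,\ldots,f_n)\cap\Pi_I^{\circ}=\emptyset$ for each non-empty $I$, since the $\Pi_I^{\circ}$ cover $\mathbb{P}$. Then the zero locus in $\mathbb{C}^n$ is the origin and the $f_i$ form a regular sequence. Fix $I$. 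By (2) we can choose $\#I$ indices $j$ with $a_j\in\langle\toset(B_I)\rangle_{\text{semigroup}}$. By Lemma~\ref{lemma:technical-lemma}(3), the restrictions of these $f_j$ are general elements of $|\mathcal{O}_{\Pi_I}(a_j)|$. By the observation above, these systems are base-point-free on $\Pi_I^{\circ}$. So Lemma~\ref{lemma:technical-lemma}(4) shows that these $\#I$ restrictions already cut $\Pi_I^{\circ}$ down to the empty set. There are finitely many $I$, and a finite intersection of dense open conditions on $\prod_i|\mathcal{O}_{\mathbb{P}}(a_i)|$ is non-empty. Hence one choice of $(f_1,\ldots,f_n)$ works for all strata simultaneously.

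The main obstacle is this genericity bookkeeping. The lemma concerns the particular ordered subfamily chosen for a given $I$, and generality must be understood in the product of the linear systems, not inductively in one fixed order. I would handle it by noting that generality for the subfamily pulls back to a dense open set of the full product, because restriction is a surjective linear map, as in Lemma~\ref{lemma:technical-lemma}(3). A second subtlety is the non-well-formed setting, where some $|\mathcal{O}_{\Pi_I}(a)|$ may be empty. The semigroup membership test is exactly what keeps track of this.
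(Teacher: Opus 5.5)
Your proposal is correct and follows the paper's proof essentially step for step. For $(1)\Rightarrow(2)$, the $f_j$ with $a_j\notin\langle\toset(B_I)\rangle_{\text{semigroup}}$ vanish on $\Pi_I$, and Krull's theorem gives a common zero locus that is positive-dimensional in $\mathbb{C}^I$ (equivalently, non-empty in $\Pi_I$); for $(2)\Rightarrow(1)$, general $f_i$ are shown to miss every stratum $\Pi_I^{\circ}$, using base-point-freeness from monomials in the $x_i$ with $i\in I$ together with Lemma~\ref{lemma:technical-lemma}(3) and (4). Your deviations are refinements of points the paper leaves implicit, not a different route: you work on the affine cone $\mathbb{C}^I$ rather than on $\Pi_I$, and you make explicit that one choice of $(f_1,\ldots,f_n)$ is general for all finitely many strata at once.
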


\begin{proof}
    \begin{itemize}
        \item $1 \implies 2$: Let $f_1, \ldots, f_n$ be a regular sequence of homogeneous elements with $\deg(f_i) = a_i$. Assume 3 is not true for some $I \subseteq \{1, \ldots, n\}$. If $f_i$ is such that
        \[
        \deg(f_i) = a_i \notin A~\overline{\cap}~\langle \toset(B_I) \rangle_{\text{semigroup}},
        \]
        then all of its terms involve a coordinate $x_i$ with $b_i \notin B_I$. Therefore, the closed stratum $\Pi_I \subset V(f_i)$.
        Now, $\dim(\Pi_I) = \# I - 1$ and
        \[
        0 \leq \# I - \# (A~\overline{\cap}~\langle \toset(B_I) \rangle_{\text{semigroup}}) - 1\leq \dim(\Pi_I \cap V(f_1, \ldots, f_n)).
        \]
        Therefore, $V(f_1, \ldots, f_n)$ contains at least one point in $\mathbb{P}$, so the $f_1, \ldots, f_n$ do not form a regular sequence.
                
        \item $2 \implies 1$: Let $f_1, \ldots, f_n$ be general homogeneous elements with $\deg(f_i) = a_i$. They exist because all of the $a_i$ are representable with the $b_i$ (just apply the hypothesis to $I = \{1, \ldots, n\}$). We show that the $f_1, \ldots, f_n$ form a regular sequence.
        
        Indeed, let $p \in \mathbb{P}$. We show that $p \notin V(f_1, \ldots, f_n)$. Let $I \subseteq \{1, \ldots, n\}$. For all $a_i \in A~\overline{\cap}~\langle \toset(B_I) \rangle_{\text{semigroup}}$, there is a monomial of degree $a_i$ involving only the variables $x_i$ with $i \in I$, so this monomial does not vanish anywhere on $\Pi_I^{\circ}$. Therefore, the complete linear system $|\mathcal{O}_{\Pi_I}(a_i)|$ is non-empty and base-point-free on $\Pi_I^{\circ}$. By Lemma \ref{lemma:technical-lemma}, the restriction of $f_i$ to $\Pi_I$ is also general. Also by the lemma, cutting $\Pi_I^{\circ}$ by the $f_i$ with $a_i \in A~\overline{\cap}~\langle \toset(B_I) \rangle_{\text{semigroup}}$ results in the empty set, since there are at least $\# I$ equations. The point $p$ is contained in exactly one $\Pi_I^{\circ}$, so the claim follows.
    \end{itemize}
\end{proof}

\begin{remark}
    We can also check if $1 \implies 2$ is consistent with \cite[Proposition 3.1]{pizzato-sano-tasin-effective}. If we do not allow linear cones and we assume that $\mathbb{P}$ is well-formed, 1 is exactly the hypothesis in \cite[Proposition 3.1]{pizzato-sano-tasin-effective} applied to $X \coloneq X_{a_1, \ldots, a_{n + 1}} \subset \mathbb{P} \coloneq \mathbb{P}(b_0, \ldots, b_n)$ after relabelling indices ($X$ is of Krull dimension $-1$, so empty and trivially smooth). We conclude that either (Q1) or (Q2) holds for all $I \subseteq \{0, \ldots, n\}$. It is easy to see that that having (Q1) for all $I$ is equivalent to 3, so we show that (Q2) leads to a contradiction for all $I$.

    Indeed, assume (Q2) is true for some $I$, and use the notation in \cite[Proposition 3.1]{pizzato-sano-tasin-effective}. The integers $e_{\mu, j}$ are chosen from a set of cardinality $n + 1 - k$. For $J = \{l + 1, \ldots, c\}$ we have that
    \begin{align*}
        &\# \{e_{\mu, j}: \mu \in \{1, \ldots, k - l\}, j \in J\} \geq k - l + \# J - 1 = k - 2l + c - 1 \\
        \implies &n + 1 - k \geq k - 2l + c - 1 \\
        \implies &c \leq n - 2k + 2l + 2 = n - 2(k - l) + 2 \leq n.
    \end{align*}
    But $c = n + 1$, a contradiction.
\end{remark}

The following proposition follows from an argument analogous to that in the proof of Proposition \ref{proposition:regular-sequence-iff-wci-iff-frobenius-coin-problems}. Again, the statement of the condition is given in \cite[Proposition 3.1]{pizzato-sano-tasin-effective}.

\begin{proposition}
    Let $a_1, \ldots, a_m, b_1, \ldots, b_n \in \mathbb{Z}_+$ with $m < n$. Let $A = [a_i]_{i = 1, \ldots, n}, B = [b_i]_{i = 1, \ldots, n}$ be the corresponding multisets. Assume that:
    \begin{enumerate}[i.]
        \item $a_i \neq b_j$ for all $i, j$. \emph{(No linear cones.)}
        \item $\gcd(b_1, \ldots, b_{i - 1}, b_{i + 1} \ldots b_n) = 1$ for all $i$. \emph{(Well-formed weighted projective space.)}
    \end{enumerate}
    Then, $1 \impliedby 2$:
    \begin{enumerate}[1.]
        \item There exists a regular sequence of homogeneous elements $f_1, \ldots, f_m$ with $\deg(f_i) = a_i$ in $\mathbb{C}[x_1, \ldots, x_n]$ with $\deg(x_i) = b_i$. In other words, there exists a weighted complete intersection $X_{a_1, \ldots, a_m} \subset \mathbb{P}(b_1, \ldots, b_n)$.
        \item For every subset of indices $I \subseteq \{1, \ldots, n\}$, the inequality $\# (A~\overline{\cap}~\langle \toset(B_I) \rangle_{\text{semigroup}}) \geq \min(m, \# I)$ holds.
    \end{enumerate}
\end{proposition}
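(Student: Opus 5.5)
We will prove $2 \implies 1$ by the strategy of the proof of Proposition~\ref{proposition:regular-sequence-iff-wci-iff-frobenius-coin-problems}: take general homogeneous $f_1, \ldots, f_m$ with $\deg(f_i) = a_i$, and show that $V(f_1, \ldots, f_m) \subset \mathbb{P}$ has dimension $n - m - 1$. Since $\mathbb{C}[x_1, \ldots, x_n]$ is Cohen--Macaulay, this is the same as saying that $f_1, \ldots, f_m$ is a regular sequence. Applying condition 2 to $I = \{1, \ldots, n\}$ gives $\#(A~\overline{\cap}~\langle \toset(B) \rangle_{\text{semigroup}}) \geq m$, so every $a_i$ is representable by the $b_j$. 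Hence each complete linear system $|\mathcal{O}_{\mathbb{P}}(a_i)|$ is non-empty, and general elements make sense.

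We will work stratum by stratum. Since $\mathbb{P}$ is the disjoint union of the finitely many open strata $\Pi_I^{\circ}$, it suffices to show that
\[
\dim\bigl(\Pi_I^{\circ} \cap V(f_1, \ldots, f_m)\bigr) \leq n - m - 1
\]
for every $I$. Fix $I$ and let $J \subseteq \{1, \ldots, m\}$ index the $a_j \in \langle \toset(B_I) \rangle_{\text{semigroup}}$. For $j \in J$ there is a monomial of degree $a_j$ in the variables $x_i$, $i \in I$, and it does not vanish on $\Pi_I^{\circ}$. So $|\mathcal{O}_{\Pi_I}(a_j)|$ is base-point-free on $\Pi_I^{\circ}$. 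By Lemma~\ref{lemma:technical-lemma}(3) the restrictions of the $f_j$ are general. The inductive argument of Lemma~\ref{lemma:technical-lemma}(4), using Krull's principal ideal theorem and stopping after $\#J$ steps, then gives
\[
\dim\bigl(\Pi_I^{\circ} \cap V(f_j : j \in J)\bigr) \leq \max\bigl(\#I - 1 - \#J, -1\bigr).
\]
By condition 2, $\#J \geq \min(m, \#I)$. If $\#I \leq m$, the stratum contributes nothing. Otherwise we get dimension at most $\#I - 1 - m \leq n - m - 1$. Taking the union over $I$ gives $\dim V(f_1, \ldots, f_m) \leq n - m - 1$. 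Krull's principal ideal theorem gives the reverse inequality, so we have equality.

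The main obstacle is bookkeeping rather than substance. We must ensure Lemma~\ref{lemma:technical-lemma}(4) is applied in a truncated form: it only needs to cut $\#J < \#I$ times, and each cut drops the dimension by one on every component. The proof of that lemma already gives this step by step. We must also ensure that genericity can be imposed simultaneously for all finitely many $I$. This holds because each requirement is a non-empty Zariski-open condition on $\prod_i |\mathcal{O}_{\mathbb{P}}(a_i)|$, and a finite intersection of non-empty open sets in an irreducible space is non-empty. Hypotheses (i) and (ii), no linear cones and well-formedness, are not needed for this direction. They will only matter for the converse, as in \cite[Proposition 3.1]{pizzato-sano-tasin-effective}.
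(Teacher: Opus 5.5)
This is correct and is essentially the ``analogous argument'' the paper invokes: you stratify by the $\Pi_I^{\circ}$, get base-point-freeness there from monomials in the $x_i$ with $i\in I$, run the Krull induction of Lemma~\ref{lemma:technical-lemma}(4) for only the $\#J\ge\min(m,\#I)$ available cuts, and finish with the Cohen--Macaulay dimension criterion. (Your ``non-empty Zariski-open'' claim for simultaneous genericity is slightly loose; it is cleanest to choose $f_1,\dots,f_m$ one at a time, each avoiding finitely many proper linear subspaces for all $I$ at once.) Your closing aside is misleading, though: (i) and (ii) are indeed unused, but they do not make the converse true, since on $\mathbb{P}(2,3,5)$ the single degree-$7$ form $x_1x_3$ is a regular sequence while $7\notin\langle 2\rangle_{\text{semigroup}}$ violates condition 2 for $I=\{1\}$.
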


The following proposition follows from \cite[Corollary 3.3]{stanley-hilbert}.

\begin{proposition} 
    Let $a_1, \ldots, a_m, b_1, \ldots, b_n \in \mathbb{Z}_+$ with $m \leq n$. Then, $1 \implies 2$:
    \begin{enumerate}[1.]
        \item There exists a regular sequence of homogeneous elements $f_1, \ldots, f_m$ with $\deg(f_i) = a_i$ in $\mathbb{C}[x_1, \ldots, x_n]$ with $\deg(x_i) = b_i$.
        \item The quotient
        \[
        \frac{\prod_{i = 1}^m 1 - q^{a_i}}{\prod_{j = 1}^n 1 - q^{b_i}} \in \mathbb{Z}_{\geq 0}[\![ q ]\!].
        \]
    \end{enumerate}
    In particular, if $m = n$, the quotient
    \[
    \prod_{i = 1}^n \frac{1 - q^{a_i}}{1-q^{b_i}} \in \mathbb{Z}_{\geq 0}[q].
    \]
\end{proposition}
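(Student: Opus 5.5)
The plan is to apply Stanley's result \cite[Corollary 3.3]{stanley-hilbert}, which the statement says is the source. Write $R \coloneq \mathbb{C}[x_1, \ldots, x_n]$ with $\deg(x_i) = b_i$, and let $f_1, \ldots, f_m$ be the given regular sequence of homogeneous elements with $\deg(f_i) = a_i$. The proof has three steps.

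\emph{Hilbert series of $R$.} The monomials in the $x_i$ form a $\mathbb{C}$-basis of $R$ graded by degree. Hence
\[
\Hilb(R, q) = \prod_{j=1}^n \frac{1}{1 - q^{b_j}}.
\]

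\emph{Effect of each $f_i$.} Set $R_{i-1} \coloneq R/(f_1, \ldots, f_{i-1})$. Since $f_i$ is a non-zero-divisor on $R_{i-1}$ and homogeneous of degree $a_i$, multiplication by $f_i$ gives a short exact sequence of graded modules
\[
0 \to R_{i-1}(-a_i) \xrightarrow{\cdot f_i} R_{i-1} \to R_i \to 0.
\]
Taking Hilbert series gives $\Hilb(R_i, q) = (1 - q^{a_i})\Hilb(R_{i-1}, q)$. By induction on $i$,
\[
\Hilb(R/(f_1, \ldots, f_m), q) = \frac{\prod_{i=1}^m (1 - q^{a_i})}{\prod_{j=1}^n (1 - q^{b_j})}.
\]
The left-hand side is a Hilbert series, so each coefficient is the dimension of a graded piece. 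Therefore the series lies in $\mathbb{Z}_{\geq 0}[\![ q ]\!]$, which proves $1 \implies 2$.

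\emph{The case $m = n$.} Here the regular sequence has maximal length, so $\dim V(f_1, \ldots, f_n) = 0$ in the affine space. The quotient $R/(f_1, \ldots, f_n)$ is then Artinian, hence finite-dimensional over $\mathbb{C}$, as noted earlier in the section. Its Hilbert series therefore has only finitely many non-zero terms. It is a polynomial with non-negative integer coefficients, so it lies in $\mathbb{Z}_{\geq 0}[q]$.

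The only point needing care is that the Hilbert series of $R$ is taken with respect to the weighted grading. This is routine, so I do not expect any real obstacle.
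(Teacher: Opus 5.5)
Your proof is correct and takes the same route as the paper. The paper simply invokes \cite[Corollary 3.3]{stanley-hilbert}, namely the identity $\Hilb(R/(f_1,\ldots,f_m),q) = \prod_{i}(1-q^{a_i})/\prod_{j}(1-q^{b_j})$ for a homogeneous regular sequence, which you rederive from the multiplication-by-$f_i$ short exact sequences; your Artinian argument for $m=n$ supplies the polynomiality that the paper leaves implicit.
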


\begin{remark}
    \emph{Why don't we also have $2 \implies 1$?} Assume 2. It is easy to see that all the $a_i$ must be representable with the $b_i$. Therefore, there exist non-zero homogeneous elements $f_1, \ldots, f_m$ with $\deg(f_i) = a_i$ in $\mathbb{C}[x_1, \ldots, x_n]$ with $\deg(x_i) = b_i$. By [Stanley, Corollary 3.2],
    \[
    \frac{\prod_{i = 1}^m 1 - q^{a_i}}{\prod_{j = 1}^n 1 - q^{b_i}} \leq \Hilb \left ( \frac{\mathbb{C}[x_1, \ldots, x_n]}{(f_1, \ldots, f_m)} \right ),
    \]
    where equality is equivalent to the $f_i$ being a regular sequence, so we cannot conclude. Indeed, \cite[Remark 32]{billey-swanson-cyclotomic} gives a counterexample:
    \[
    \frac{(1 - q^2)(1 - q^3)^2(1 - q^8)(1 - q^{12})}{(1 - q)^2(1 - q^4)^2(1-q^6)} \in \mathbb{Z}_{\geq 0}[q],
    \]
    and it is easy to see (directly, or by using Proposition \ref{proposition:regular-sequence-iff-wci-iff-frobenius-coin-problems}) that there does not exist a weighted complete intersection $X_{2,3,3,8,12} \subset \mathbb{P}(1,1,4,4,6)$.
\end{remark}

\begin{remark}
    If the quotient
    \[
    \frac{\prod_{i = 1}^m 1 - q^{a_i}}{\prod_{j = 1}^n 1 - q^{b_i}} \in \mathbb{Z}[q]
    \]
    but has at least one negative coefficient, a general choice of homogeneous $f_i$ may not be a regular sequence. Indeed,
    \[
    \frac{(1-q^3)(1-q^5)(1-q^{14})}{(1-q^2)(1-q^3)(1-q^7)} \in \mathbb{Z}[q]
    \]
    and has at least one negative coefficient. Since $3, 5, 14 \in \langle 2, 3, 7 \rangle_{\text{semigroup}}$, there exist general homogeneous $f_1, f_2, f_3 \in \mathbb{C}[x_1, x_2, x_3]$ with
    \[
    \deg (x_1) = 2, \quad \deg (x_2) = 3, \quad \deg (x_3) = 7
    \]
    and
    \[
    \deg (f_1) = 3, \quad \deg (f_2) = 5, \quad \deg (f_3) = 14,
    \]
    but they cannot form a regular sequence.
\end{remark}

We may now state and prove our main theorem.

\begin{theorem} \label{theorem:main-theorem}
    Let $a_1, \ldots, a_n, b_1, \ldots, b_n \in \mathbb{Z}_+$ with $a_i \leq a_{i + 1}$ and $b_i \leq b_{i + 1}$ for all $i = 1, \ldots, n - 1$. Let $A = [a_i]_{i = 1, \ldots, n}, B = [b_i]_{i = 1, \ldots, n}$ be the corresponding multisets. Assume that
    \[
    \prod_{i = 1}^n \frac{1 - q^{a_i}}{1 - q^{b_i}} \in \mathbb{Z}[q].
    \]
    Assume further that
    \[
    2b_n\left \lfloor \frac{b_{n - 1}}{2} \right \rfloor - b_1 < a_1.
    \]
    Then, for every subset of indices $I \subseteq \{1, \ldots, n\}$, the inequality $\# (A~\overline{\cap}~\langle \toset(B_I) \rangle_{\text{semigroup}}) \geq \# I$ holds. In particular,
    \[
    \prod_{i = 1}^n \frac{1 - q^{a_i}}{1 - q^{b_i}} \in \mathbb{Z}_{\geq 0}[q].
    \]
\end{theorem}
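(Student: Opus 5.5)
The plan is to verify condition 2 of Proposition~\ref{proposition:regular-sequence-iff-wci-iff-frobenius-coin-problems} directly. Once it holds, that proposition gives a regular sequence of degrees $a_1, \ldots, a_n$ in $\mathbb{C}[x_1, \ldots, x_n]$ with $\deg(x_i) = b_i$. The proposition derived from \cite[Corollary 3.3]{stanley-hilbert}, applied with $m = n$, then yields non-negativity of the coefficients.

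Fix a non-empty $I \subseteq \{1, \ldots, n\}$; the case $I = \emptyset$ is trivial. Put $d \coloneq \gcd(B_I)$ and $S' \coloneq \toset(B_I)/d = \{s_1 < \cdots < s_k\}$, so that $\gcd(S') = 1$.

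\emph{Step 1 (divisibility from polynomiality).} Since $d \mid b_i$ for every $i \in I$, the multiset $\biguplus_{i=1}^n D(b_i)$ contains $d$ with multiplicity at least $\# I$. By Lemma~\ref{lemma:polynomial-iff-multiset-containment}, the same holds for $\biguplus_{i=1}^n D(a_i)$. Hence at least $\# I$ of the $a_i$, counted with multiplicity, are divisible by $d$.

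\emph{Step 2 (each such $a_i$ lies in the semigroup).} It suffices to show $d\,F(S') < a_1$. Then every $a_i$ with $d \mid a_i$ satisfies $a_i/d \geq a_1/d > F(S')$, so $a_i/d \in \langle S' \rangle_{\text{semigroup}}$, that is, $a_i \in \langle \toset(B_I) \rangle_{\text{semigroup}}$. This gives $\# (A~\overline{\cap}~\langle \toset(B_I) \rangle_{\text{semigroup}}) \geq \# I$.

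If $k = 1$, then $S' = \{1\}$ and $F(S') = -1$, so the claim is trivial. If $k \geq 2$, Selmer's bound (Proposition~\ref{proposition:selmer-bound}) and $\lfloor s_1/k \rfloor \leq \lfloor s_1/2 \rfloor$ give
\[
d\,F(S') \leq 2 s_{k-1}\, d\left\lfloor \frac{s_1}{2}\right\rfloor - d s_1 .
\]
We then compare with the $b$'s as follows.
\begin{itemize}
    \item[(i)] $d s_1$ is some $b_j$, so $d s_1 \geq b_1$.
    \item[(ii)] Because $k \geq 2$, the value $d s_1$ is strictly smaller than another element of $B$. It is therefore not the unique largest value, so $d s_1 \leq b_{n-1}$. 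Combined with $d\lfloor s_1/2\rfloor \leq \lfloor d s_1/2 \rfloor$, this gives $d\lfloor s_1/2 \rfloor \leq \lfloor b_{n-1}/2 \rfloor$.
    \item[(iii)] $s_{k-1} \leq d s_{k-1} \leq b_n$.
\end{itemize}
Together these give $d\,F(S') \leq 2 b_n \lfloor b_{n-1}/2 \rfloor - b_1 < a_1$ by hypothesis.

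The only delicate point is Step 2, specifically the bookkeeping of the factor $d$ when passing from the reduced semigroup $S'$ back to the original weights. The argument relies on the inequality $d\lfloor s_1/2\rfloor \leq \lfloor b_{n-1}/2\rfloor$, which is what produces the $b_{n-1}$ (rather than $b_n$) in the floor of the hypothesis. It also uses the observation that $d s_1 \leq b_{n-1}$ requires $k \geq 2$, which is why the case $k = 1$ is handled separately. Everything else is an assembly of Lemma~\ref{lemma:polynomial-iff-multiset-containment}, Selmer's bound, and Proposition~\ref{proposition:regular-sequence-iff-wci-iff-frobenius-coin-problems}.
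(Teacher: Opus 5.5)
Your argument follows the same route as the paper's proof. Lemma~\ref{lemma:polynomial-iff-multiset-containment} forces at least $\# I$ of the $a_i$ to be multiples of $d=\gcd(\toset(B_I))$, and a Selmer-type bound on $F(\toset(B_I)/d)$ then puts each of these $a_i$ (all $\geq a_1$) into $\langle \toset(B_I)\rangle_{\text{semigroup}}$. Your version also shows that the paper's first case is redundant. However, the Selmer step is false as you wrote it. It needs two small repairs, and neither changes your final inequality.

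\emph{First repair.} In Selmer's bound the multiplier is the \emph{largest} generator: $F(S)\le 2s_k\lfloor s_1/k\rfloor-s_1$. The $s_{k-1}$ in Proposition~\ref{proposition:selmer-bound} is a misprint; the paper's own proof uses $\max(\toset(B_I))$. Concretely, for $S'=\{3,100\}$ your display would give $F(S')\le 2\cdot 3\cdot 1-3=3$, whereas $F(S')=197$. With $s_k$ in place of $s_{k-1}$, step (iii) becomes $s_k\le ds_k=\max\toset(B_I)\le b_n$, and (i), (ii) are unchanged.

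\emph{Second repair.} Selmer's bound presupposes a minimal generating set, and $\toset(B_I)/d$ need not be one. For example, $S'=\{7,8,14,15\}$ would give $F(S')\le 2\cdot 15\cdot\lfloor 7/4\rfloor-7=23$, while $F(S')=F(\{7,8\})=41$. So apply the bound to the minimal generating subset $S_0\subseteq S'$ instead:
\begin{itemize}
\item $S_0$ generates the same semigroup and contains $s_1$;
\item $\max S_0\le s_k$;
\item $\#S_0\ge 2$ unless $s_1=1$, and in that case $F(S')=-1$ and there is nothing to prove.
\end{itemize}
This yields $F(S')\le 2s_k\lfloor s_1/\#S_0\rfloor-s_1\le 2s_k\lfloor s_1/2\rfloor-s_1$. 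That is exactly the weakened bound your chain (i)--(iii) actually uses. The paper's proof passes over this second point as well.
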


\begin{proof}
    Let $I \subseteq \{1, \ldots, n\}$. Two cases:
    \begin{enumerate}[1.]
        \item $\#(A~\overline{\cap}~\mathbb{Z}\toset(B_I)) \geq \# I$: This case is trivial since $A~\overline{\cap}~\mathbb{Z}\toset(B_I) \subseteq A~\overline{\cap}~\langle \toset(B_I) \rangle_{\text{semigroup}}$.
        \item $\#(A~\overline{\cap}~\mathbb{Z}\toset(B_I)) < \# I$: By Lemma \ref{lemma:polynomial-iff-multiset-containment}, we have that
        \[
        B_I \subseteq B \subset \biguplus_{i = 1}^n D(b_i) \subseteq \biguplus_{i = 1}^{n} D(a_i),
        \]
        so $g \coloneq \gcd(\toset(B_I))$ is in $\biguplus_{i = 1}^n D(a_i)$ with multiplicity at least $\# B_I$ and so $\#(A~\overline{\cap}~\mathbb{Z}g) \geq \# B_I = \# I$. Therefore, it suffices to show that $A~\overline{\cap}~\mathbb{Z}g \subseteq A~\overline{\cap}~\langle \toset(B_I) \rangle_{\text{semigroup}}$.

        By Selmer's bound \cite{selmer-linear} (here, Proposition \ref{proposition:selmer-bound}) to the Frobenius number $F(B_I/g)$,
        \[
        \max \{i \in \mathbb{Z}g: i \notin \langle \toset(B_I) \rangle_{\text{semigroup}} \} \leq 2 \max(\toset(B_I)) \left \lfloor \frac{\min(\toset(B_I))}{g\# \toset(B_I)} \right \rfloor - \min(\toset(B_I)).
        \]
        Therefore, it is enough that
        \[
        2b_n\left \lfloor \frac{b_{n - 1}}{2} \right \rfloor - b_1 < a_1,
        \]
        since, without loss of generality, $\# \toset(B_I) \geq 2$.
    \end{enumerate}
\end{proof}
\begin{corollary} \label{corollary:conjecture-gk-upper-bound}
    Conjecture \ref{conjecture:conjecture-gk} is true if
    \[
    2k \left \lfloor \frac{k - 1}{2} \right \rfloor + k - l < n.
    \]
    In particular, it is true if $n \geq k^2$.
    Hence, Conjecture \ref{conjecture:conjecture-gk} is true except perhaps for finitely many choices of $l, n$ for each choice of $k$.
\end{corollary}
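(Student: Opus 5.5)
We apply Theorem~\ref{theorem:main-theorem} directly. By the reduction noted after Conjecture~\ref{conjecture:conjecture-gk}, we may assume $1 \leq l < k \leq n/2$. The cases $l = k$ and $k > n/2$ reduce to this one by the symmetry $\binom{n}{k}_q = \binom{n}{n-k}_q$, and the case $l = 0$ can be added separately or handled by the same argument. In this range the quotient \eqref{equation:GK} equals
\[
\prod_{i=1}^{k-l} \frac{1-q^{n-k+i}}{1-q^{l+i}},
\]
so we are in the setting of the theorem. Its length is $k-l$ in place of $n$, and it has $a_i = n-k+i$ and $b_i = l+i$ for $i = 1, \ldots, k-l$. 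Both sequences are increasing, and polynomiality is the hypothesis of the conjecture. We have $b_1 = l+1$, $b_{k-l} = k$, and $b_{k-l-1} = k-1$ when $k-l \geq 2$.

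The condition of the theorem reads $2k\lfloor (k-1)/2 \rfloor - (l+1) < n-k+1$, that is, $2k\lfloor (k-1)/2\rfloor + k - l < n + 2$. This is implied by the stated hypothesis $2k\lfloor (k-1)/2\rfloor + k - l < n$, with a little room to spare. The theorem then gives non-negativity of the coefficients.

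Two degenerate situations must be handled separately. If $k - l = 1$, the index $b_{n-1}$ does not exist. In that case the quotient is $(1-q^{n-k+1})/(1-q^k)$, and whenever it is a polynomial it equals $1 + q^k + q^{2k} + \cdots$, which is clearly non-negative. Alternatively, in the proof of the theorem the subsets $I$ have $\#\toset(B_I) = 1$, which is the trivial case. I would mention this explicitly, since the proof of the theorem assumed "without loss of generality $\#\toset(B_I) \geq 2$". If $l = 0$, then $b_1 = 1$ and the formula becomes $2k\lfloor (k-1)/2\rfloor + k < n+2$, which the hypothesis also covers.

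For the ``in particular'' claim, note that $2k\lfloor (k-1)/2\rfloor \leq k(k-1)$. Hence the left-hand side is at most $k^2 - k + k - l = k^2 - l < k^2 \leq n$ when $l \geq 1$. For $l = 0$ it is at most $k^2$, so we use the sharper inequality $< n+2$ obtained above. Finally, for fixed $k$ we have $0 \leq l < k$, and the condition fails only for $n < k^2$ (or $n \leq k^2$ in the weaker form). This leaves finitely many pairs $(l, n)$.

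The main obstacle is bookkeeping rather than mathematics. One must translate the indices correctly, including the use of $b_{n-1} = k-1$, and check the edge cases $k - l = 1$ and $l = 0$, where the hypotheses of the theorem degenerate. Everything else is substitution.
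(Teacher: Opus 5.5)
Your proposal is correct and is exactly the intended derivation. The paper gives no separate proof; the corollary is obtained by specializing Theorem~\ref{theorem:main-theorem} to $a_i = n-k+i$, $b_i = l+i$ in the normalized range $1 \leq l < k \leq n/2$, as you do. Your further observations are accurate: the theorem only needs the slightly weaker hypothesis $2k\lfloor (k-1)/2\rfloor + k - l < n+2$, and the degenerate case $k-l=1$, where $b_{n-1}$ is undefined, needs the separate geometric-sum argument you give, which the paper leaves implicit.
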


\begin{corollary} \label{corollary:conjecture-stanton-upper-bound}
    Conjectures \ref{conjecture:conjecture-stanton} and \ref{conjecture:conjecture-stanton-gk} are true if
    \[
    2n \left \lfloor \frac{n}{2} \right \rfloor - 2 < m.
    \]
    In particular, they are true if $m \geq n^2$. 
    Hence, they are true except perhaps for finitely many choices of $m$ for each choice of $n$.
\end{corollary}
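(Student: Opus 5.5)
The plan is to specialise Theorem~\ref{theorem:main-theorem} to the fraction \eqref{equation:stanton-fraction}. Fix $n$, $m$ and $\mathbf{a}=(a_1,\ldots,a_n)$, and set $N \coloneq \sum_i a_i$. If $N=0$ the fraction equals $1$ and there is nothing to prove. Otherwise the fraction is $\prod_{j=1}^N (1-q^{\alpha_j})/(1-q^{\beta_j})$, where:
\begin{itemize}
\item $B=[\beta_j]$ is the multiset containing each $i$ with multiplicity $a_i$;
\item $A=[\alpha_j]$ is the multiset containing each $m+i$ with multiplicity $a_i$.
\end{itemize}
Sort both increasingly. Then $\beta_1=i_0$, the least index with $a_{i_0}>0$, and $\alpha_1=m+i_0$. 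All $\beta_j\le n$, so in particular $\beta_N\le n$ and $\beta_{N-1}\le n$.

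In both conjectures polynomiality of \eqref{equation:stanton-fraction} is a hypothesis. In Conjecture~\ref{conjecture:conjecture-stanton} it is assumed for all $m$, hence for the given $m$. So the first hypothesis of Theorem~\ref{theorem:main-theorem} holds, and only the inequality needs checking. Using the bounds above,
\[
2\beta_N\left\lfloor \frac{\beta_{N-1}}{2}\right\rfloor-\beta_1 \le 2n\left\lfloor\frac n2\right\rfloor - i_0 .
\]
This is less than $m+i_0=\alpha_1$ as soon as $2n\lfloor n/2\rfloor < m+2i_0$. Since $i_0\ge 1$, it suffices that $2n\lfloor n/2\rfloor-2<m$, which is exactly the assumption. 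Theorem~\ref{theorem:main-theorem} then gives non-negativity of the coefficients. Symmetry of $\mathbf{a}$ is never used, so both conjectures follow simultaneously.

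The one degenerate point is $N=1$, where $\beta_{N-1}$ is undefined. Here I would argue directly: the fraction is $(1-q^{m+i})/(1-q^i)$, and polynomiality forces $i\mid m+i$. Hence it equals $1+q^i+\cdots$, which has non-negative coefficients. Alternatively, this case is covered by the proof of the theorem, because $\#\toset(B_I)=1$ is trivial there. This bookkeeping is the only place needing care; I expect no real obstacle.

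Finally, $2n\lfloor n/2\rfloor\le n^2$, so $m\ge n^2$ gives $m>n^2-2\ge 2n\lfloor n/2\rfloor-2$. Thus for fixed $n$ only the finitely many values $m\le 2n\lfloor n/2\rfloor-2$ remain open.
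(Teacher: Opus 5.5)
Your argument is correct and is essentially the paper's: the paper gives no separate proof of this corollary and presents it as a direct specialisation of Theorem~\ref{theorem:main-theorem}, using $\beta_j \le n$ for all $j$ and $\alpha_1 = m+\beta_1 \geq m+1$, which is exactly your computation. Your explicit handling of the degenerate cases $N \le 1$, where $\beta_{N-1}$ is undefined, is a welcome addition that the paper leaves implicit.
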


\begin{remark}
    Since Theorem \ref{theorem:main-theorem} is quite general, it is very likely that Proposition \ref{proposition:regular-sequence-iff-wci-iff-frobenius-coin-problems} can be used to prove even more cases of the aforementioned conjectures by using specific properties of the multisets $A$ and $B$. For example, Conjecture \ref{conjecture:conjecture-gk} involves \emph{intervals}, which, along with the polinomiality, may force the elements of $A$ to be much larger than the elements of $B$.
\end{remark}

\section{The lattice-theoretic and analytic points of view}
In this section we give more sufficient conditions for the non-negativity of cyclotomic generating functions (CGFs). This time, the conditions arise from structural properties of the \emph{division lattice} $(\mathbb{Z}_{\geq 0}, \mid)$. We will also see that such conditions cannot be necessary, since CGFs are often non-negative for \emph{analytic} reasons. In particular, we will briefly discuss Poincaré multipliers and asymptotic normality.

By abuse of notation, we will sometimes denote $(\mathbb{Z}_{\geq 0}, \mid)$ by $\mathbb{Z}_{\geq 0}$. It is well-known that it is a distributive lattice with the meet operation being the GCD and the join operation being the LCM. Furthermore, $\prec$ will denote a \emph{covering} relation.

\begin{proposition}\label{proposition:quotient-ai-bi}
    Let $a_1, \ldots, a_m, b_1, \ldots, b_n \in \mathbb{Z}_+$ with $m \geq n$. Let $A = [a_i]_{i = 1, \ldots, m}, B = [b_i]_{i = 1, \ldots, n}$ be the corresponding multisets. Assume that the quotient
    \begin{equation}\label{equation:quotient-ai-bi}
        \frac{\prod_{i = 1}^m 1 -q^{a_i}}{\prod_{i = 1}^n 1 - q^{b_j}} \in \mathbb{Z}[q].
    \end{equation}
    The following are sufficient conditions for \eqref{equation:quotient-ai-bi} to have non-negative coefficients:
    \begin{enumerate}[1.]
        \item The multiset
        \[
        \Delta \coloneq \biguplus_{i = 1}^m D(a_i) \setminus \biguplus_{j = 1}^n D(b_j)
        \]
        is a disjoint multiset union of sets of the form $p^k \lor I$, where $p$ prime, $k \in \mathbb{Z}_+$, and $I$ is a principal lower order ideal of $\mathbb{Z}_{\geq 0}$ with $p \notin I$ (equivalently, $I = D(d)$ for some $d$ with $p \nmid d$).
        \item The multiset $\Delta$ is a disjoint multiset union of sets $D_k$ such that, for each $k$, either
        \begin{enumerate}[a.]
            \item $\omega(D_k) \subset \{0, 1\}$, or
            \item $D_k = \{d, d'\}$, where $d \prec d'$ and $\omega(d) \leq 1$.
        \end{enumerate}
        \item There exists a bijection $f: \{1, \ldots, n\} \to \{1, \ldots, m\}$ such that $D(b_j) \subseteq D(a_{f(j)})$ (equivalently, $b_j | a_{f(j)}$) for all $j \in \{1, \ldots, n\}$. In this case, the corresponding local quotients are non-negative and flat.
    \end{enumerate}

    The following are sufficient conditions for the local quotients in \eqref{equation:quotient-ai-bi} to be flat:
    \begin{enumerate}[i.]
        \item The multiset $\Delta$ is a disjoint multiset union of sets $D_k$ such that, for each $k$, either
        \begin{enumerate}[a.]
            \item $\omega(D_k) \subset \{0, 1, 2\}$, or
            \item $D_k = \{d, d'\}$, where $d \prec d'$ and $\omega(d) \leq 2$.
        \end{enumerate}
    \end{enumerate}

    All of these conditions also have a corresponding ``local'' version which is generally more useful in examples, because then it is possible to use different criteria for different local quotients.
\end{proposition}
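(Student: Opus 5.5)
The strategy is to factor the quotient into cyclotomic polynomials and then treat the pieces independently. By Lemma~\ref{lemma:polynomial-iff-multiset-containment}, polynomiality of \eqref{equation:quotient-ai-bi} is exactly the multiset inclusion $\biguplus_j D(b_j)\subseteq\biguplus_i D(a_i)$. Using $1-q^n=\prod_{d\mid n}\Phi_d(q)$, this gives
\[
\frac{\prod_{i=1}^m (1-q^{a_i})}{\prod_{j=1}^n (1-q^{b_j})}=\prod_{d\in\Delta}\Phi_d(q),
\]
with multiplicities taken from $\Delta$. Every decomposition of $\Delta$ into a disjoint multiset union $\biguplus_k D_k$ then splits the quotient into ``local quotients'' $\prod_{d\in D_k}\Phi_d(q)$. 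A product of polynomials with non-negative coefficients again has non-negative coefficients. Hence for conditions 1, 2 and 3 it suffices to show that each local factor is non-negative. The local versions of the criteria follow from the same observation, since the criterion may be chosen separately for each block.

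For condition 1, fix a block $p^k\lor D(d)$ with $p\nmid d$; it equals $\{p^k e: e\mid d\}$. Then
\[
\prod_{e\mid d}\Phi_{p^ke}(q)=\prod_{e\mid d}\Phi_{pe}(q^{p^{k-1}}),
\]
by Proposition~\ref{proposition:cyclotomic-polynomials}(6) applied repeatedly, or equivalently by (4). Next, Proposition~\ref{proposition:cyclotomic-polynomials}(5) gives $\Phi_{pe}(x)=\Phi_e(x^p)/\Phi_e(x)$. Taking the product over $e\mid d$, the block equals
\[
\frac{1-x^{pd}}{1-x^d}=1+x^d+\cdots+x^{(p-1)d},\qquad x=q^{p^{k-1}},
\]
which is non-negative and flat.

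For condition 2, blocks of type (a) are products of $\Phi_{p^j}(q)=\Phi_p(q^{p^{j-1}})$ and of $\Phi_1(q)=q-1$. The factor $\Phi_1$ is the troublesome one. I would rule it out by noting that $1\in\Delta$ with positive multiplicity only if $m>n$; in the intended setting $m=n$ it does not occur, and otherwise blocks containing $1$ have to be excluded. The remaining factors are non-negative by Proposition~\ref{proposition:cyclotomic-polynomials}(3),(4). For type (b), take $d=p^j$ and $d'=p^{j}r$ with $r$ prime. If $r=p$, both factors are non-negative. If $r\ne p$, then $\Phi_d\Phi_{d'}=\Phi_{p^j}(q^r)$ by (5), which is non-negative. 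The case $d=1$, $d'=r$ again involves $\Phi_1$ and is handled as above.

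For condition 3, write the quotient as $\prod_j \frac{1-q^{a_{f(j)}}}{1-q^{b_j}}\cdot\prod_{i\notin f(\cdot)}(1-q^{a_i})$. Each factor $\frac{1-q^{a}}{1-q^{b}}$ with $b\mid a$ equals $1+q^b+\cdots+q^{a-b}$, which is flat and non-negative. The leftover factors $1-q^{a_i}$ are a genuine obstacle, since they have a negative coefficient; I would argue that the statement needs $f$ to be a bijection, which forces $m=n$. For the flatness criterion i, flatness is meant block by block, that is, each local quotient is flat. Blocks of type (a) then use Migotti's theorem (Proposition~\ref{proposition:cyclotomic-polynomials}(1)) applied to each single $\Phi_d$ with $\omega(d)\le2$, reading ``local quotient'' as the individual factor. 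Blocks of type (b) give $\Phi_d(q^r)$ with $\omega(d)\le2$ when $r\nmid d$, and $\Phi_d\Phi_{dr}$ otherwise. The second case is where I expect the main difficulty, since products of flat polynomials need not be flat. There I would use $\Phi_{dr}(q)=\Phi_d(q^r)$ and check that the supports of the two factors do not produce coefficients larger than $1$ in absolute value, or else restrict the claim to the factorwise reading.
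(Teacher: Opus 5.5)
Your strategy is the paper's: write the quotient as $\prod_{d\in\Delta}\Phi_d$, split it along the blocks, and treat each block with the identities of Proposition~\ref{proposition:cyclotomic-polynomials}. The only difference is in condition~1. The paper reads off $\frac{1-q^{p^kd}}{1-q^{p^{k-1}d}}$ directly from $p^k\lor D(d)=D(p^kd)\setminus D(p^{k-1}d)$, whereas you reach the same geometric sum through items~6 and~5.

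Your caveats are correct, and the paper's proof passes over them. If $m>n$, the quotient equals $(1-q)^{m-n}\prod_i[a_i]_q/\prod_j[b_j]_q$, so it vanishes at $q=1$ and cannot have non-negative coefficients. Conditions~1 and~3 force $m=n$, but condition~2 does not: for $A=[2,3]$, $B=[3]$ one gets $\Delta=\{1,2\}$, which is of type~(a) and of type~(b), and the quotient is $1-q^2$. Likewise, i(a) only holds factorwise: $\{2,3\}$ is of type~(a), but $\Phi_2\Phi_3=1+2q+2q^2+q^3$.

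The genuine gap in your write-up is i(b) with $r\mid d$, which you leave open. The support check you propose cannot settle it. The supports of $\Phi_d(q)$ and $\Phi_{dr}(q)=\Phi_d(q^r)$ overlap, and flatness comes from cancellation: for $d=15$, $r=3$, the coefficient of $q^{16}$ in $\Phi_{15}(q)\Phi_{15}(q^3)$ is $-1+1-1$. The paper's proof has the same hole, since it cites only items~1 and~5, and item~5 needs $r\nmid d$.

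The missing idea is as follows. With $R=\rad(d)$ and $x=q^{d/R}$, items~4 and~6 give $\Phi_d(q)\Phi_{dr}(q)=\Phi_R(x)\Phi_{rR}(x)$. If $R=r$, this is $(1-x^{r^2})/(1-x)$, which is flat. If $R=rs$ with $s\neq r$ prime, items~5 and~6 give $\Phi_s(x)\Phi_{rs}(x)\Phi_{r^2s}(x)=\Phi_s(x^{r^2})$, hence
\[
\Phi_{rs}(x)\Phi_{r^2s}(x)=\frac{(1-x)(1-x^{r^2s})}{(1-x^{r^2})(1-x^{s})}=(1-x)\sum_{t\in\langle r^2,s\rangle_{\text{semigroup}}}x^t .
\]
Its coefficient of $x^t$ is $\chi(t)-\chi(t-1)$, where $\chi$ is the indicator function of that semigroup, so it lies in $\{-1,0,1\}$. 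Substituting $x=q^{d/R}$ preserves flatness, which closes the case.
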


\begin{proof}
    \begin{enumerate}[1.]
        \item Let $\mathcal{I}\mathcal{P}\mathcal{K}$ be the corresponding set of triplets. The quotient \eqref{equation:quotient-ai-bi} is equal to
        \[
        \prod_{(I, p, k) \in \mathcal{I}\mathcal{P}\mathcal{K}} \prod_{n \in p^k \lor I } \Phi_n(q).
        \]
        For each $(I, p, k) \in \mathcal{I}\mathcal{P}\mathcal{K}$, we have that
        \[
        \prod_{n \in p^k \lor I} \Phi_n(q) \prod_{n \mid p^{k - 1}d} \Phi_n(q) = \prod_{\substack{n = p^k d' \\ d' \mid d}} \Phi_n(q)\prod_{n \mid p^{k - 1}d} \Phi_{n}(q) = \prod_{n \mid p^k d} \Phi_n(q),
        \]
        therefore
        \[
        \prod_{n \in p^k \lor I} \Phi_n(q) = \frac{1-q^{p^k d}}{1-q^{p^{k - 1} d}}.
        \]
        Substituting $q' = q^{p^{k - 1} d}$, we see that this is a geometric sum.
        \item Let $\mathcal{K}$ be the corresponding set of indices. As in 1, we decompose the quotient \eqref{equation:quotient-ai-bi} into a product indexed by $k \in \mathcal{K}$. If $D_k$ is of type a (respectively, b), the conclusion follows from Proposition \ref{proposition:cyclotomic-polynomials} 3 and 4 (respectively, 3 and 5). 
        \item The quotient \eqref{equation:quotient-ai-bi} is equal to
        \[
        \prod_{j \in \{1, \ldots, n\}} \frac{1 - q^{a_{f(j)}}}{1 - q^{b_j}}.
        \]
        For each $j = 1, \ldots, n$, substituting $q' = q^{b_j}$, we see that this is a geometric sum.
    \end{enumerate}
    \begin{enumerate}[i.]
        \item As in 1, we decompose the quotient \eqref{equation:quotient-ai-bi} into a product indexed by $k \in \mathcal{K}$. If $D_k$ is of type a (respectively, b), the conclusion follows from Proposition \ref{proposition:cyclotomic-polynomials} 1 and 4 (respectively, 1 and 5).
    \end{enumerate}
\end{proof}

\begin{example}
    Even if $m = n$, the quotient \eqref{equation:quotient-ai-bi} may be a polynomial with at least one negative coefficient. For example,
    \[
    \frac{(1-q^{105})(1-q^3)(1-q^5)(1-q^7)}{(1-q^{35})(1-q^{21})(1-q^{15})(1-q)} = \Phi_{105}(q)
    \]
    which famously has a negative coefficient of $-2$.
\end{example}

\begin{remark}
    In Proposition \ref{proposition:quotient-ai-bi} 3, there may exist a well-defined function $f: \{1, \ldots, n\} \to \{1, \ldots, m\}$ such that $D(b_j) \subseteq D(a_{f(j)})$ for all $j \in \{1, \ldots, n\}$, but $f$ may not be a bijection, even if $m = n$. Indeed, in Example \ref{example:two-bi-one-ai}, $1$ and $3$ must both map to $1$ and $2$ must map to $2$ (because $b_1, b_3 \mid a_1$ and $b_2 \mid a_2$), so $3$ must have empty preimage (because $b_i \nmid a_3$ for all $i = 1, 2, 3$).
    
    In general, however, if the quotient \eqref{equation:quotient-ai-bi} is a polynomial, there always exists a well-defined function $f$ with the property mentioned above, simply because, for all $j \in \{1, \ldots, n\}$, the cyclotomic polynomial $\Phi_{b_j}$ is a factor of $1 - q^{a_i}$ for some $i \in \{1, \ldots, m\}$, which implies that $b_j \mid a_i$.
\end{remark}

\begin{example}
    Consider the polynomial quotient:
    \[
    \frac{(1-q^{60})(1-q^{66})(1-q^{72})}{(1-q^{10})(1-q^{11})(1-q^{12})}.
    \]
    The second local quotient is equal to:
    \[
    \Phi_{2} \Phi_{3} \Phi_{2 \cdot 3} \Phi_{2 \cdot 11} \Phi_{3 \cdot 11} \Phi_{2 \cdot 3 \cdot 11}.
    \]
    We could apply Proposition \ref{proposition:quotient-ai-bi} 1 to deduce local non-negativity, with the triplets
    \[
    (D(11), 2, 1) \quad \text{ and } \quad (D(2 \cdot 11), 3, 1), 
    \]
    and similarly for the other local quotients.

    We could also apply Proposition \ref{proposition:quotient-ai-bi} 3 to deduce global non-negativity, since
    \[
    10 \mid 60, \quad  11 \mid 66, \quad 12 \mid 72.
    \]
\end{example}

If at least one of the conditions in Proposition \ref{proposition:quotient-ai-bi} were generally true for the sequences
\[
(a_i)_{i = 1, \ldots, k - l} = (l + 1, l + 2, \ldots, k) \quad \text{ and } \quad (b_j)_{i = 1, \ldots, k - l} = (n - k + 1, n - k + 2, n - k),
\]
Conjecture \ref{conjecture:conjecture-gk} would follow. Unfortunately, this is not the case, as the following example shows:

\begin{example} \label{example:two-bi-one-ai}
    Consider the polynomial quotient:
    \[
    \frac{\binom{271}{17}_q}{\binom{271}{14}_q} = \frac{(1-q^{255})(1-q^{256})(1-q^{257})}{(1-q^{15})(1-q^{16})(1-q^{17})} = \Phi_{3 \cdot 17} \Phi_{5 \cdot 17} \Phi_{3 \cdot 5 \cdot 17} \Phi_{2^5} \Phi_{2^6} \Phi_{2^7} \Phi_{2^8} \Phi_{257},
    \]
    which, as can be seen by using e.g. SageMath \cite{sagemath}, has non-negative coefficients (!) This is remarkable, since $\Phi_{3 \cdot 17} \Phi_{5 \cdot 17} \Phi_{3 \cdot 5 \cdot 17}$ has negative coefficients (it is also flat by Proposition \ref{proposition:quotient-ai-bi} i). In other words, the non-negativity does \emph{not} come from a geometric sum or a low-omega argument such as those used in the proof of Proposition \ref{proposition:quotient-ai-bi}, but from a genuine interaction between the different local quotients. SageMath also tells us that $\Phi_{2^5} \Phi_{2^6} \Phi_{2^7} \Phi_{2^8} \Phi_{257}$ has a normal-like coefficient distribution.
\end{example}

\begin{theorem} \cite{poincare-equations} Let $f \in \mathbb{R}[x]$ be a polynomial. The following are equivalent:
    \begin{enumerate}[1.]
        \item $f(x) > 0$ for all $x \geq 0$.
        \item There exists $g \in \mathbb{R}[x]$ monic such that $fg$ has only non-negative coefficients. The polynomial~$g$ is called a \emph{Poincaré multiplier.}
    \end{enumerate}
\end{theorem}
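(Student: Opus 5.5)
The plan is to prove both implications directly, using the factorisation of $f$ over $\mathbb{R}$ for the hard direction. First the reading of condition 2: as literally written the equivalence fails ($f(x)=x$ and $g=1$ satisfy 2 but $f(0)=0$). So I will read 2 as asking that $fg$ be non-zero with non-negative coefficients and positive constant term. Equivalently, the coefficients of $fg$ are positive up to its degree, which we can always arrange by a further factor $(1+x)^M$.

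\emph{$2 \implies 1$.} Suppose $g$ is monic and $h \coloneq fg$ has non-negative coefficients with $h(0)>0$.
\begin{enumerate}[1.]
  \item For every $x \geq 0$ we have $h(x) \geq h(0) > 0$, so $f$ has no root in $[0,\infty)$. Hence $f$ has constant sign there.
  \item Since $g$ is monic, the leading coefficient of $h$ equals that of $f$. It is positive because $h$ has non-negative coefficients.
  \item Thus $f(x)>0$ for large $x$, and so $f>0$ on all of $[0,\infty)$.
\end{enumerate}

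\emph{$1 \implies 2$.} Assume $f>0$ on $[0,\infty)$. Then the leading coefficient $c$ of $f$ is positive, and by the fundamental theorem of algebra
\[
f(x) = c \prod_{i} (x + r_i) \prod_{j} \bigl(x^2 - 2\rho_j \cos(\theta_j)\, x + \rho_j^2\bigr),
\]
with every $r_i > 0$ (no real root lies in $[0,\infty)$), every $\rho_j>0$ and every $\theta_j \in (0,\pi)$. Products of polynomials with non-negative coefficients again have non-negative coefficients. So it suffices to find, for each quadratic factor, a monic real multiplier making that factor non-negative, and then take $g$ to be the product of these multipliers; $g$ is monic as a product of monic polynomials.

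\emph{The quadratic factor.} Fix a factor $Q(x) = (x-\rho e^{i\theta})(x-\rho e^{-i\theta})$ with $\theta\in(0,\pi)$.
\begin{enumerate}[1.]
  \item Choose $N \geq 1$ with $\cos(N\theta) \leq 0$. Such an $N$ exists: the points $N\theta \bmod 2\pi$ advance by steps of length $\theta<\pi$. The arc $[\pi/2, 3\pi/2]$ has length $\pi$, so it cannot be jumped over, and some multiple lands in it.
  \item Both $\rho e^{\pm i\theta}$ are roots of
  \[
  P(x) = x^{2N} - 2\rho^N\cos(N\theta)\, x^N + \rho^{2N} = (x^N - \rho^N e^{iN\theta})(x^N - \rho^N e^{-iN\theta}).
  \]
  Since $\theta \notin \{0, \pi\}$ these two roots are distinct, so $Q \mid P$ in $\mathbb{C}[x]$.
  \item Both $P$ and $Q$ are real and monic, so $P/Q$ is a monic real polynomial.
  \item By the choice of $N$, $P$ has non-negative coefficients and constant term $\rho^{2N}>0$. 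So $P/Q$ is the required multiplier for $Q$.
\end{enumerate}
The case $\theta \in (0,\pi/2]$, where $Q$ has a negative middle coefficient, is the only real obstacle, and this cyclotomic-type trick handles it. Pólya's theorem, that $(1+x)^M f$ has positive coefficients for $M$ large, would be an alternative route.

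Finally, the constant term of $fg$ is a product of positive constant terms, so it is positive. If strictly positive coefficients are wanted, multiply $g$ by $(1+x)^M$ with $M \geq \deg(fg)$. This keeps $g$ monic, and the product then has all coefficients up to its degree positive, because a polynomial with non-negative coefficients and positive constant and leading terms, multiplied by $(1+x)^M$, has no gaps.
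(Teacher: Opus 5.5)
The paper gives no proof of this statement (it is only quoted from Poincaré), so there is nothing internal to compare against. Your proof is correct and complete.

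You are also right that the printed statement needs repairing, since $f(x)=x$, $g=1$ satisfies 2 but not 1. Asking for $fg$ to have positive constant term, or equivalently (by your $(1+x)^M$ remark) positive coefficients up to its degree, is the right fix. It costs the paper nothing, because only Pólya's theorem, a strengthening of $1 \Rightarrow 2$, is used afterwards.

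Pólya's route yields $1 \Rightarrow 2$ at once with the universal multiplier $(1+x)^k$, but it needs a quantitative lower bound for the homogenisation of $f$ on a simplex. Your factor-by-factor construction uses only the fundamental theorem of algebra and produces a multiplier adapted to the roots.

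One tidy-up in step 1: take $N$ minimal with $N\theta \geq \pi/2$. Then $(N-1)\theta < \pi/2$ gives $N\theta \in [\pi/2,\pi)$ directly, without reducing modulo $2\pi$. With this choice your multiplier even has positive coefficients. Using $\sum_{k\geq 0} \frac{\sin((k+1)\theta)}{\sin\theta}\, y^k = (1-2\cos\theta\, y+y^2)^{-1}$ with $y = x/\rho$, one finds
\[
\frac{P(x)}{Q(x)} = \sum_{j=0}^{2N-2} \rho^{2N-2-j}\, \frac{\sin\bigl((\min(j,2N-2-j)+1)\theta\bigr)}{\sin\theta}\, x^j ,
\]
and every angle $(\min(j,2N-2-j)+1)\theta$ lies in $(0,N\theta] \subset (0,\pi)$.

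Moreover, for a single quadratic factor the degree $2N = 2\lceil \pi/(2\theta)\rceil$ of $P$ is essentially minimal. This is because a non-zero polynomial of degree $d$ with non-negative coefficients has no roots $z \neq 0$ with $|\arg z| < \pi/d$.
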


\begin{theorem} \cite{polya-positive} \label{theorem:polya} Let $f \in \mathbb{R}[x]$ be a polynomial such that $f(x) > 0$ for all $x \geq 0$. Then, there exists $k \in \mathbb{Z}_+$ such that $f(x)(1 + x)^k$ has only non-negative coefficients.
\end{theorem}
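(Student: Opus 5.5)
We will follow the classical homogenisation argument. Write $f(x) = \sum_{j=0}^d c_j x^j$ with $c_d \neq 0$. Since $f(x) > 0$ for all $x \geq 0$, we get $c_0 = f(0) > 0$. Also $f(x) \to \operatorname{sign}(c_d)\cdot\infty$ as $x \to \infty$, so $c_d > 0$.

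Introduce the homogeneous form $F(x,y) \coloneq y^d f(x/y) = \sum_{j=0}^d c_j x^j y^{d-j}$. We claim $F > 0$ on the compact simplex $\Delta = \{(s,t) : s,t \geq 0,\ s + t = 1\}$:
\begin{itemize}
\item for $t > 0$ we have $F(s,t) = t^d f(s/t) > 0$;
\item at $(1,0)$ we have $F(1,0) = c_d > 0$.
\end{itemize}
By compactness, $\mu \coloneq \min_\Delta F > 0$. It then suffices to show that $(x+y)^N F(x,y)$ has non-negative coefficients for $N$ large, because setting $y = 1$ gives the statement with $k = N$.

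Next we compute the coefficients explicitly. For $\alpha + \beta = N + d$, the coefficient of $x^\alpha y^\beta$ in $(x+y)^N F$ is $\sum_j c_j \binom{N}{\alpha - j}$, where binomials with negative or too-large lower index are zero. Using $\binom{N}{\alpha-j} = \frac{N!}{\alpha!\,\beta!}\,\alpha^{\underline{j}}\,\beta^{\underline{d-j}}$, with falling factorials that vanish exactly when the binomial does, this coefficient equals
\[
\frac{N!}{\alpha!\,\beta!}\,(N+d)^d \sum_{j=0}^d c_j\, s^{(j)}\, t^{(d-j)},
\]
where $s = \alpha/(N+d)$, $t = \beta/(N+d)$, and $s^{(j)} = s\,(s - \tfrac{1}{N+d})\cdots(s - \tfrac{j-1}{N+d})$ is a perturbed power (similarly $t^{(d-j)}$). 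The prefactor is positive, so it suffices that the sum $F_N(s,t)$ is positive for all lattice points $(s,t) \in \Delta$.

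The key estimate is a uniform comparison of $F_N$ with $F$. On $\Delta$ we have $0 \leq s,t \leq 1$. Each factor in $s^{(j)}$ differs from $s$ by at most $d/(N+d)$, so $|s^{(j)} - s^j| \leq C_d/(N+d)$ for a constant $C_d$ depending only on $d$, and likewise for $t$. Hence
\[
|F_N(s,t) - F(s,t)| \leq \frac{C}{N+d}, \qquad C \coloneq C'_d \sum_j |c_j|,
\]
uniformly on $\Delta$. Choosing $N$ with $C/(N+d) < \mu$ gives $F_N > 0$ at every relevant point, so all coefficients are positive.

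The main obstacle is bookkeeping rather than substance: we must verify that the falling-factorial identity also handles the boundary cases $\alpha < j$ or $\beta < d-j$ (where both sides are zero, so the estimate still applies) and keep the error constant independent of $(\alpha,\beta)$. We will handle this by bounding each perturbed product term by term, using that each factor lies in $[-1,1]$.
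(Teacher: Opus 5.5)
Your proof is correct. The boundary cases of the falling-factorial identity work as you say, and the telescoping estimate gives $|F_N-F|\le d^2\sum_j|c_j|/(N+d)$ on all of $\Delta$, so any $k > d^2\sum_j|c_j|/\mu - d$ works and in fact yields strictly positive coefficients. The paper gives no proof of its own and simply cites P\'olya; your argument (homogenise, express the coefficients of $(x+y)^N F$ through falling factorials, compare uniformly with $F$ on the simplex) is exactly his classical proof, specialised to two variables.
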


\begin{corollary}
    Theorem \ref{theorem:polya} applies to finite products of cyclotomic polynomials $\Phi_{n_i}$ with $n_i \neq 1$ for all $i$.
\end{corollary}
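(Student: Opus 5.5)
The plan is to verify the single hypothesis of Theorem~\ref{theorem:polya}: if $f(q) = \prod_i \Phi_{n_i}(q)$ is a finite product with every $n_i \geq 2$, then $f(x) > 0$ for all real $x \geq 0$. Once this is established, Theorem~\ref{theorem:polya} directly gives some $k \in \mathbb{Z}_+$ such that $f(x)(1+x)^k$ has only non-negative coefficients.

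\textbf{Step 1: no roots on $[0,\infty)$.} By definition, $\Phi_n(q) = \prod_{\gcd(n,k)=1} (q - e^{2\pi i k/n})$, so the roots of $\Phi_n$ are exactly the primitive $n$-th roots of unity. Every root therefore has absolute value $1$, so $0$ is not a root. The only root of unity that is a non-negative real number is $1$, and $1$ is a primitive $n$-th root of unity only when $n = 1$. Hence for $n \geq 2$, $\Phi_n$ has no root in $[0,\infty)$. The roots of $f$ are the union of the roots of its factors, so $f$ also has no root in $[0,\infty)$.

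\textbf{Step 2: positivity.} Each $\Phi_n$ is monic of degree $\varphi(n) \geq 1$ with real (indeed integer) coefficients, so $f$ is a monic real polynomial of positive degree. Consequently $f(x) \to +\infty$ as $x \to +\infty$, so $f(x_0) > 0$ for some large $x_0$. Since $f$ is continuous and has no zero on $[0,\infty)$, the intermediate value theorem forces $f$ to have constant sign on $[0,\infty)$. That sign is therefore positive, i.e.\ $f(x) > 0$ for all $x \geq 0$.

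One can also check Step 2 directly at $x=0$: since $\Phi_n(0)$ is the product of the negatives of the roots, we get $|\Phi_n(0)| = 1$, and for $n \geq 2$ the non-real roots pair with their complex conjugates, giving $\Phi_n(0) = 1$ (for $n=2$, $\Phi_2(0) = 1$ directly). This check is not needed, however.

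There is no real obstacle in this argument. The only point requiring care is the exclusion of $n_i = 1$: $\Phi_1(q) = q-1$ vanishes at $1$ and is negative on $[0,1)$, so the hypothesis $n_i \neq 1$ is exactly what is needed.
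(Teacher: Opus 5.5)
Your proof is correct and is essentially the paper's argument. Both use that the roots of $\Phi_{n_i}$ lie on the unit circle and that $1$ is excluded when $n_i \neq 1$, so there is no zero on $[0,\infty)$, and then fix the sign at one point. The only difference is where you fix the sign: the paper uses $\Phi_{n_i}(0)=1$, while you use monicity as $x \to +\infty$, and you also make explicit the intermediate-value step that the paper leaves implicit.
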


\begin{proof}
    If $n_i \neq 1$, it is well-known that $\Phi_{n_i}(0) = 1$. Also, $\Phi_{n_i}$ has all of its roots in the unit circle. Since $n_i \neq 1$, $1$ is not a root. Therefore, $\Phi_{n_i}(x) > 0$ for all $x \geq 0$.
\end{proof}

Note that $(1 + x)^k$ also has a normal-like coefficient distribution. Therefore, it seems that multiplying by polynomials with normal-like coefficient distributions makes the product have non-negative coefficients. This is the \emph{analytic} reason for the non-negativity of certain CGFs. In this direction, results about asymptotic normality of coefficient distributions of products of cyclotomic polynomials, such as those in \cite{billey-swanson-cyclotomic}, could help find more such necessary or sufficient criteria.

\section*{Acknowledgements}
We would like to thank Balázs Szendrői for his support and many useful comments. We would also like to thank Livia Campo, Pedro Montero and Markus Reibnegger for insightful conversations. We used the model Gemini 3 Pro by Google as a tool for informal discussion of ideas and generation of experimental evidence, and SageMath \cite{sagemath} for various computations.

\bibliographystyle{amsalpha}
\bibliography{main}

\end{document}